\theoremstyle{plain}
\newtheorem{thm}{Theorem}[section]
\newtheorem{prop}[thm]{Proposition}
\newtheorem{cor}[thm]{Corollary}
\theoremstyle{definition}
\newtheorem{remark}[thm]{Remark}
\newtheorem{definition}[thm]{Definition}
\newtheorem{Example}[thm]{Example}
\numberwithin{equation}{section}
\newcommand{\bnum}{\begin{enumerate}}
	\newcommand{\enum}{\end{enumerate}}
\begin{document}
\title{Representations and primitive central idempotents of a finite solvable group}
\author[Ravi S. Kulkarni and Soham Swadhin Pradhan]{Ravi S. Kulkarni and Soham Swadhin Pradhan}
\keywords{Solvable group, Long presentation, Irreducible representation, Group algebra, Primitive central idempotent}
\address{Bhaskaracharya Pratishthana, 56/14, Damle Path, Pune 411004, India}
\email{punekulk@gmail.com}

\address{Stat Math Unit, Indian Statistical Institute Bangalore, Bangalore 560059, India}
\email{soham.spradhan@gmail.com}
\begin{abstract}
Let $G$ be a finite solvable group. Then $G$ always has a  useful presentation, which we call a ``long presentation". 
Using a ``long presentation" of $G$, we present an inductive method of constructing the irreducible representations of $G$ over $\mathbb{C}$ and computing the primitive central idempotents of the complex group algebra $\mathbb{C}[G]$. For a finite abelian group, we present a systematic method of constructing the irreducible representations  
over a field of characteristic either $0$ or prime to order of the group and also a systematic method of computing the primitive central idempotents of the semisimple abelian group algebra.	
\end{abstract}
\maketitle
\section{Introduction}
Let $G$ be a finite group. Let $F$ be a field of characteristic either $0$ or prime to the order of $G$. We denote the order of $G$ by $|G|$. Let $\overline{F}$ be the algebraic closure of $F$. It is well known fact that the primitive central idempotents of the group algebra $\overline{F}[G]$ are all elements of the form 
$$\frac{\chi(1)}{|G|}\sum_{g \in G}\chi(g^{-1})g,$$
where $\chi$ is an irreducible $\overline{F}$-character of $G$. Using Galois decent, one can obtain all the primitive central idempotents of the semisimple group algebra $F[G]$. The known methods to compute the character table of a finite group are tasks of exponential growth with respect to the order of the group.
In view of the computational difficulty in this approach, mathematicians are interested in the problem of finding character-free methods for computing the primitive central idempotents of $F[G]$. In last few years, the problem has been solved for certain classes of groups and for some specific fields.

For a finite abelian group $G$, an explicit description of the primitive central idempotents of the rational group algebra and Wedderburn decomposition of $\mathbb{Q}[G]$ was considered by several authors (see \cite{jes 1}, \cite{milies126}). In this paper, for a finite abelian group $G$ and a field $F$ of characteristic either $0$ or prime to $|G|$, we present a systematic method of constructing the irreducible $F$-representations of $G$ and a character-free method of computing the primitive central idempotents of the group algebra $F[G]$.

Among the finite groups, the finite solvable groups surely cover a very large ground. Note that
\begin{enumerate}
	\item[1.] For almost all natural numbers $n$, all finite groups of order $n$ are solvable.
	\item[2.] Given a natural number n, almost all the groups of order n are solvable.
\end{enumerate}
Yet, the ground that is not covered, concentrated on a very small patch of ``measure zero", also contains many beautiful things. 

A solvable group always has a useful presentation, which we call a ``long presentation". In this paper, for a finite solvable group $G$, using a ``long presentation" of $G$ and the associated ``long system of generators", we iductively construct the irreducible representations of $G$ over $\mathbb{C}$ and also compute the primitive central idempotents of the group algebra $\mathbb{C}[G]$.
 More precisely, our main concern is avoiding the use of characters by a direct computation of the primitive central idempotents of $\mathbb{C}[G]$ in terms of a ``long system of generators" associated with a fixed maximal subnormal series of $G$, which refines it's derived series.
\section{Long Presentation}
For a finite solvable group $G$, we can always find a refinement of its derived series, which is a subnormal series of $G$ such that each successive quotients are cyclic groups of prime order. So, for a finite solvable group $G$ of order $N = p_{1}p_{2} \ldots p_{n}$, where $p_{i}$'s are primes, there is a subnormal series: $ \langle {e}\rangle = G_{o} \trianglelefteq G_{1} \trianglelefteq \ldots \trianglelefteq G_{n} = G$ such that for each $i = 1, 2, \ldots , n$, $G_{i}/G_{i-1} \approx C_{p_{i}}$, a cyclic group of order $p_{i}$ after a suitable ordering of $\{p_{1}, p_{2}, \dots , p_{n}\}$. One can choose an element $x_i$ in $G_{i}$ s.t. $x_i$ is a $p_{i}$-element of smallest order $p^{n_{i}}_{i}$, and $x_iG_{i-1}$ generates $G_i/G_{i-1}$. Note that such an element  always exists in $G$. Then $G$ has a presentation:
\begin{align*}
\langle \,x_1, x_2, \dots , x_n | x_i^{p_i^{n_i}} = 1, x_i^{p_i} = w_i(x_1, ..., x_{i-1}), \,x_i^{-1}x_jx_i = w_{ij}(x_1,..., x_{i-1}) \mbox{ for } j<i \,\rangle,
\end{align*}
where $w_i$ and $w_{ij}$ are certain words in $x_1, x_2, ..., x_{i-1}$. 
We call such a presentation, a 
{\it{long presentation}} of $G$, and $\{x_1, x_2, \dots , x_n\}$ is called the associated {{\it long system of generators}}. A remarkable thing is that each element of $G$ can be expressed uniquely as $x_1^{a_1}x_2^{a_2}...x_n^{a_n}$, where $0 \le a_i < p_i$.
\section{Representations and Primitive Central Idempotents of a Cyclic Group over an algebraically closed field}
Let $G = C_{N}$ be a cyclic group of order $N$. Let $N = {\prod_{p|N}}{p^{n_p}}$, be its factorisation into primes. Let $G_p = \{y \in G| {{y}^{p}}^{n_p} = 1\}$.
Then $G = {\prod_{p|N}}{G_p}$. Let $y_p$ be a generator of $G_p$. Then $y = {\prod_{p|N}}$ $y_p$ is a generator of $G$. Choose a system of long generators $G$ in each $G_p$. A system of long generators of $G$ is obtained by various products of long generators in each $G_p$.  
\begin{prop}\label{cyclic}
Let $G$ be a cyclic group of order $N$. Let $F$ be an algebraically closed field of characteristic $0$ or prime to $N$.
The irreducible representations of $G$ are the tensor products of the irreducible representations of $G_p$'s for $p |N$.
\end{prop}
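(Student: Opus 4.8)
The plan is to reduce everything to the primary (Sylow) decomposition $G = \prod_{p \mid N} G_p$ recorded just above, together with the standard fact that over an algebraically closed field the irreducible representations of a direct product of finite groups are exactly the external tensor products of irreducible representations of the factors. Concretely, given irreducible representations $\rho_p \colon G_p \to GL(V_p)$ for each $p \mid N$, one forms the external tensor product $\boxtimes_{p}\,\rho_p$ acting on $\bigotimes_p V_p$ by $\left(\boxtimes_p \rho_p\right)\!\left(\prod_p g_p\right) = \bigotimes_p \rho_p(g_p)$, which makes sense precisely because $G$ factors as $\prod_p G_p$. The assertion to prove is that every irreducible representation of $G$ arises this way, and uniquely so.

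First I would observe that, since $G$ is abelian, every irreducible representation of $G$ (and of each $G_p$) over the algebraically closed field $F$ is one-dimensional, i.e.\ a homomorphism into $F^\times$. This immediately disposes of the irreducibility of the tensor products: a tensor product of one-dimensional representations is one-dimensional, hence irreducible, so each $\boxtimes_p \rho_p$ is an irreducible representation of $G$. In character terms, $\boxtimes_p \rho_p$ is the homomorphism $\chi \colon G \to F^\times$ with $\chi\!\left(\prod_p g_p\right) = \prod_p \rho_p(g_p)$.

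Next I would show this construction exhausts all irreducibles. Restricting an irreducible $\chi \colon G \to F^\times$ to each factor $G_p$ gives homomorphisms $\chi_p := \chi|_{G_p}$, and because $G = \prod_p G_p$ the original $\chi$ is recovered as $\boxtimes_p \chi_p$; conversely distinct tuples $(\chi_p)_p$ yield distinct $\chi$ by evaluation on the factors. This is exactly the universal property $\operatorname{Hom}\!\left(\prod_p G_p, F^\times\right) \cong \prod_p \operatorname{Hom}(G_p, F^\times)$, giving a bijection between tuples of irreducibles of the $G_p$ and irreducibles of $G$. If one prefers a counting argument in place of the explicit inverse, the number of irreducibles of the abelian group $G$ over $F$ equals $|G| = N$, while the number of tensor products equals $\prod_p |\Irr(G_p)| = \prod_p |G_p| = N$, so injectivity forces the correspondence to be onto.

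The step requiring the most care is the passage from ``restriction to the factors'' back to ``$\chi$ is the tensor product of its restrictions'', i.e.\ verifying that the external tensor product of the restrictions genuinely reconstructs $\chi$ rather than merely some representation agreeing on generators. This is where the hypothesis that $F$ is algebraically closed of admissible characteristic enters in the general (non-abelian) form of the direct-product theorem; in the present abelian setting it reduces to the elementary fact that a character of $\prod_p G_p$ is determined by, and factors through, its values on the individual factors.
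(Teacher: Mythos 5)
Your proposal is correct and follows essentially the same route as the paper: both arguments use that irreducible representations of the abelian group $G$ over the algebraically closed $F$ are one-dimensional, restrict such a representation to each primary component $G_p$, and recover it as the (tensor) product of these restrictions via the decomposition $G=\prod_{p\mid N}G_p$. Your version is somewhat more complete, since you also verify the converse direction (that every tensor product of irreducibles of the $G_p$ is irreducible and that the correspondence is a bijection), which the paper leaves implicit.
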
 
\begin{proof}
Let $\rho$ be an irreducible representation $\rho$ of $G$. As is well known, $\rho$ is of degree 1, and $\rho$ is given by $y \longmapsto \zeta$, where $\zeta$ is an $N^{th}$ root of unity. 
%
%
Clearly $G_p$ is a cyclic subgroup of order $p^{n_p}$. 
Let $\rho_p = \rho|_{G_p}$ be the restriction of $\rho$ to $G_{p}$. Suppose $\rho_p$ is a representation defined by $y_p \longmapsto \zeta_p$, where $\zeta_p$ is a $p^{n_p}$-th root of unity. Clearly $\zeta = \Pi_{p}{\zeta_{p}}$ and $\rho = \Pi_p\rho_p$.
Hence, $\rho$ is the tensor products of the irreducible representations of $G_{p}$'s for $p | N$.
\end{proof}
\begin{prop} Let $G$ be a cyclic group of order $N$. Let $F$ be a field of characteristic 0 or prime to $|G|$.
Let $\rho$ be an irreducible representation of $G$ over $F$.
Let $\rho_p$ = an irreducible component of $\rho|_{G_p}$, the restriction of $\rho$ to $G_{p}$. 
Then the primitive central idempotent corresponding to $\rho$ is the product of primitive central idempotents's  of $\rho_p$'s. 
\end{prop}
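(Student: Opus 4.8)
The plan is to reduce to the absolutely irreducible case already settled in Proposition~\ref{cyclic} and then descend to $F$ by Galois theory. Throughout, for a finite abelian group $H$ with $F[H]$ semisimple I write $e_F(\sigma)$ for the primitive central idempotent of $F[H]$ attached to an $F$-irreducible representation $\sigma$, and for a linear $\overline{F}$-character $\psi$ of $H$ I put $e(\psi)=\frac{1}{|H|}\sum_{h\in H}\psi(h^{-1})\,h$, its primitive central idempotent over $\overline{F}$.

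First I would record the $\overline{F}$-version as a direct computation. If $\bar\rho$ is an irreducible $\overline{F}$-representation of $G$ with character $\bar\chi$, then by Proposition~\ref{cyclic} we have $\bar\rho=\prod_p\bar\rho_p$ with each $\bar\rho_p$ linear, so $\bar\chi(g)=\prod_p\bar\chi_p(g_p)$ under the unique factorization $g=\prod_p g_p$ coming from $G=\prod_{p\mid N}G_p$. Substituting this into $e(\bar\chi)=\frac{1}{N}\sum_{g\in G}\bar\chi(g^{-1})g$ and using that the sum over $G$ is the product of the sums over the $G_p$, the idempotent factors termwise as $e(\bar\chi)=\prod_p e(\bar\chi_p)$. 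This is exactly the assertion over $\overline{F}$.

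Next I would descend to $F$. Because $F[G]$ is semisimple and $G$ is abelian (so all Schur indices are $1$), the $F$-irreducible representations correspond to the orbits of the linear $\overline{F}$-characters under $\Gamma:=\mathrm{Gal}(F(\zeta_N)/F)$, and the primitive central idempotent of $F[G]$ attached to $\rho$ is the orbit sum $e_F(\rho)=\sum_{\psi}e(\psi)$, where $\psi$ runs over the $\Gamma$-orbit of any absolutely irreducible constituent $\bar\chi$ of $\rho$; likewise $e_F(\rho_p)=\sum_{\psi_p}e(\psi_p)$ over the $\Gamma_p$-orbit of $\bar\chi_p$, with $\Gamma_p:=\mathrm{Gal}(F(\zeta_{p^{n_p}})/F)$. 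The decisive point is that, the prime powers $p^{n_p}$ being pairwise coprime, restriction of automorphisms gives an isomorphism $\Gamma\cong\prod_{p\mid N}\Gamma_p$ acting coordinatewise on the characters of $G=\prod_p G_p$. Granting this, a $\Gamma$-orbit of characters of $G$ is precisely a product of $\Gamma_p$-orbits of characters of the $G_p$, so multiplying out and applying the $\overline{F}$-identity termwise gives
\[
\prod_p e_F(\rho_p)=\prod_p\Bigl(\sum_{\psi_p}e(\psi_p)\Bigr)=\sum_{(\psi_p)}\prod_p e(\psi_p)=\sum_{(\psi_p)}e\Bigl(\prod_p\psi_p\Bigr)=\sum_{\psi}e(\psi)=e_F(\rho),
\]
which simultaneously proves the claimed equality and the primitivity of the product.

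I expect the only genuine obstacle to be the isomorphism $\Gamma\cong\prod_p\Gamma_p$, equivalently the linear disjointness over $F$ of the cyclotomic extensions $F(\zeta_{p^{n_p}})$ of pairwise coprime order. Over an algebraically closed $F$ it is vacuous, and over $\mathbb{Q}$ it is the classical fact that $\mathbb{Q}(\zeta_m)\cap\mathbb{Q}(\zeta_{m'})=\mathbb{Q}$ for coprime $m,m'$. For a general $F$ one must verify $F(\zeta_{p^{n_p}})\cap\prod_{q\neq p}F(\zeta_{q^{n_q}})=F$; I would isolate this as a lemma, since it is not automatic for every base field and is exactly the condition under which $\prod_p e_F(\rho_p)$ is primitive rather than merely a central idempotent. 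Once that lemma is in place, the displayed computation completes the proof.
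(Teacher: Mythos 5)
Your first paragraph is, in substance, the paper's entire proof: the proposition lives in the section on algebraically closed fields, so the paper takes $\rho$ to be linear, $y\mapsto\zeta$, writes $e_\rho=\frac{1}{N}\sum_{i=0}^{N-1}\zeta^{-i}y^i$, and observes that this factors termwise as $\prod_{p\mid N}e_{\zeta_p}$ via $G=\prod_p G_p$. Up to that point you and the paper agree.

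Your Galois-descent continuation aims at the literal statement (an arbitrary $F$ of characteristic $0$ or prime to $N$), and your instinct to isolate the linear-disjointness of the $F(\zeta_{p^{n_p}})$ as the crux is exactly right --- but that ``lemma'' is not merely nontrivial, it is false in general, so the general-$F$ version of the proposition fails. Concretely, take $F=\mathbb{F}_{29}$ and $G=C_{15}=C_3\times C_5$. Since $29\equiv -1\pmod{15}$, the Frobenius acts as $\chi\mapsto\chi^{-1}$ on characters, and each of $F(\zeta_3)$, $F(\zeta_5)$, $F(\zeta_{15})$ is the quadratic extension $\mathbb{F}_{29^2}$; the restriction map $\Gamma\to\Gamma_3\times\Gamma_5$ is far from surjective. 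For a faithful $F$-irreducible $\rho$ with constituent $\chi=\chi_3\chi_5$ one gets
\[
e_F(\rho_3)\,e_F(\rho_5)=\bigl(e(\chi_3)+e(\chi_3^{-1})\bigr)\bigl(e(\chi_5)+e(\chi_5^{-1})\bigr)=e_F(\rho)+e_F(\rho'),
\]
where $\rho'$ is the distinct $F$-irreducible with constituents $\chi_3\chi_5^{-1},\ \chi_3^{-1}\chi_5$. The product is a central idempotent but not primitive, and it is not $e_F(\rho)$. So you cannot ``put the lemma in place''; the correct reading is that the hypothesis of algebraic closure from the section heading is tacitly in force (equivalently, one must assume the relevant cyclotomic extensions are linearly disjoint over $F$), and under that hypothesis your first paragraph already completes the proof exactly as the paper does.
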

\begin{proof}
Let $\rho$ be an irreducible representation $\rho$ of $G$ is given by $y \longmapsto \zeta$, where $\zeta$ is an $N^{th}$ root of unity. 
The primitive central idempotent corresponding to $\rho$ in $F[G]$ is:
\begin{equation*}\tag{*}
e_{\rho} = \frac{1}{N}\sum_{i = 0}^{N-1} \zeta^{-i}y^i .
\end{equation*}
For simplicity we denote this expression by $e_{\zeta}$. 
The primitive central idempotent of $\rho_p$ is obtained by replacing $N$ by $p^{n_p}$ in $(*)$, and $y$ by $y_p$, 
Clearly the $e_{\rho} = e_{\zeta}$ factors as $\prod_{p | N}e_{\zeta_p}$. This completes the proof.
\end{proof}
\begin{thm}
Let $G = C_{p^{n}} = \langle{x_{1}, \dots ,x_{n} \mid x_{1}^{p} = 1, x_{2}^{p} = x_{1}, \dots , x_{n}^{p} = x_{n - 1}}\rangle$ be a long presentation of $G$.
Let $F$ be an algebraically closed field with characteristic $0$ or prime to $p$.
Then every primitive central idempotent of ${{F}}[G]$ can be expressed as
$$e_{{\zeta^{-1}_1}{x_{1}}}e_{{\zeta^{-1}_2}{x_{2}}} \dots e_{{\zeta^{-1}_n}{x_{n}}}$$ 
with $\zeta_{n}$ is a $p^{n}$th root of unity in ${F}$ and $\zeta^{p}_{n} = \zeta_{n - 1}, \dots, \zeta^{p}_{2} = \zeta_{1},$
where $e_{X} = (1+ X + \dots + X^{p-1})/{p}$ and $X$ is an indeterminate.
\end{thm}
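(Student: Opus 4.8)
The plan is to reduce the statement to the explicit formula for the primitive central idempotents of a cyclic group recorded in the previous proposition, and then to verify that the displayed product collapses to that formula by a base-$p$ bookkeeping argument. First I would read off from the relations $x_n^p = x_{n-1}, \dots, x_2^p = x_1, x_1^p = 1$ that $x_n$ is a generator of $G$ of order $p^n$ and that each lower generator is the power $x_i = x_n^{p^{n-i}}$. By the previous proposition applied with $N = p^n$ and $y = x_n$, the primitive central idempotents of $F[G]$ are exactly the $p^n$ elements $e_\zeta = \frac{1}{p^n}\sum_{k=0}^{p^n-1}\zeta^{-k}x_n^k$, one for each $p^n$-th root of unity $\zeta \in F$.

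Next I would translate the compatibility conditions $\zeta_n^p = \zeta_{n-1}, \dots, \zeta_2^p = \zeta_1$ into the closed form $\zeta_i = \zeta_n^{p^{n-i}}$ by downward induction, and set $\zeta := \zeta_n$. This makes the correspondence $\zeta \leftrightarrow (\zeta_1, \dots, \zeta_n)$ a bijection between $p^n$-th roots of unity and admissible systems, so that there are exactly $p^n$ such systems, matching the number of primitive central idempotents. It then suffices to show, for each fixed $\zeta$, that the product $e_{\zeta_1^{-1}x_1}\cdots e_{\zeta_n^{-1}x_n}$ equals $e_\zeta$.

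For the core computation I would expand each factor as $e_{\zeta_i^{-1}x_i} = \frac{1}{p}\sum_{j=0}^{p-1}\zeta^{-jp^{n-i}}x_n^{jp^{n-i}}$, using $\zeta_i = \zeta^{p^{n-i}}$ and $x_i = x_n^{p^{n-i}}$. Multiplying the $n$ factors, all of which lie in the commutative algebra $F[G]$, produces a sum over tuples $(j_0, \dots, j_{n-1}) \in \{0, \dots, p-1\}^n$ of the term $\frac{1}{p^n}\zeta^{-\sum_m j_m p^m}x_n^{\sum_m j_m p^m}$, where I have reindexed by $m = n-i$. The one genuinely load-bearing step is the observation that the base-$p$ expansion $k \mapsto (j_0, \dots, j_{n-1})$ is a bijection from $\{0, 1, \dots, p^n - 1\}$ onto $\{0, \dots, p-1\}^n$ satisfying $\sum_m j_m p^m = k$, so the product collapses exactly into $\frac{1}{p^n}\sum_{k=0}^{p^n-1}\zeta^{-k}x_n^k = e_\zeta$.

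I expect this base-$p$ collapse to be the only substantive point. It is worth remarking that the individual factors $e_{\zeta_i^{-1}x_i}$ are \emph{not} themselves idempotents, since $(\zeta_i^{-1}x_i)^p = \zeta_{i-1}^{-1}x_{i-1} \neq 1$ for $i \geq 2$; hence one cannot argue factor by factor, and the global identity must be established by this direct summation rather than by invoking idempotency of the pieces.
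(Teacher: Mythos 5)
Your proposal is correct and follows essentially the same route as the paper: the paper obtains the identity $e_{\zeta}=e_{\zeta_1^{-1}x_1}\cdots e_{\zeta_n^{-1}x_n}$ by repeatedly applying the polynomial factorisation $1+X+\dots+X^{mn-1}=(1+\dots+X^{m-1})(1+X^m+\dots+X^{(n-1)m})$ to $1+X+\dots+X^{p^n-1}$ and then substituting $\zeta^{-1}x_n$, which is exactly your base-$p$ collapse read in the factoring rather than the expanding direction. Your closing remark that the individual factors $e_{\zeta_i^{-1}x_i}$ are not themselves idempotents is a correct and worthwhile observation not made explicit in the paper.
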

\begin{proof}
Let $\rho$ be an irreducible representation $\rho$ of $G$, and  is given by $x_{n} \longmapsto \zeta$, where $\zeta$ is an $N^{th}$ root of unity. By Theorem \ref{cyclic}, $\rho$ is isomorphic to
the tensor product of $\rho_p$'s. The primitive central idempotent corresponding to $\rho$ in $F[G]$ is:
$$  e_{\rho} = \frac{1}{N}\sum_{i = 0}^{N-1} \zeta^{-i}{x_{n}}^i .$$
For simplicity we denote this expression by $e_{\zeta}$. 

Let $N = mn$, with $m> 1, n > 1$ be a factorisation in natural numbers. Then 
$$1 + X + \dots + X^{mn-1} = (1 + X + X^2 + \dots + X^{m-1})(1 + X^m + X^{2m} + \dots + X^{(n-1)m}).$$
We apply this repeatedly to the case: $N = p^n$. Write
$e_X = {1 + X + \dots + X^{p-1}}/p$, and let $ x_1, x_2, \dots, x_{n}$ be as in the statement, satisfy
$x_1^p = 1, x_2^p = x_1, x_3^p = x_2, \dots , x_n^p = x_{n-1}$.
Then writing $x_n = x$, we have ${1 + x + x^2 + ... + x^{p^n-1}}/p^n = e_{x_1}e_{x_2} \dots e_{x_n}$. Replacing $x$ by $\zeta^{-1}x$, we obtain the second universal factorisation of 
$$ e_{\zeta} = \displaystyle {\prod_{i = 0}^{n-1}e_{{\zeta_1}^{-1}x_1}e_{{\zeta_2}^{-1}x_2} ... e_{{\zeta_n}^{-1}x_n}},$$ where 
$\zeta_n = \zeta,  \zeta_{n-1} = \zeta^p, \zeta_{n-2} = \zeta_{n-1}^p = \zeta^{p^2}, \dots, \zeta_{1} =\zeta_{2}^p = \zeta^{p^{n-1}}$. This completes the proof.
\end{proof}
\begin{remark}
The expression of every primitive central idempotents of $F[C_{p^{n}}]$ has $p^n$ terms. Such an expression is a product of $n$ factors, and each factor containing $p$
terms. So each primitive central idempotent in $F[C_{p^{n}}]$ has an expression containing $pn$ terms.
\end{remark}
\section{Representations and Primitive Central idempotents of a Cyclic Group}\label{cypi}
In this section, we construct the irreducible representations of a cyclic group $G$ over a field $F$ of characteristic either $0$ or prime to $|G|$ 
and compute the primitive central idempotents of $F[G]$.
\subsection{Representations of Cyclic Groups}
\begin{thm}\label{theo}
Let $G = C_{n} = \langle{x | x^{n} = 1}\rangle$. Let $F$ be a field of characteristic $0$ or prime to $n$. Let $X^{n} - 1 = \displaystyle \prod^{k}_{i = 1}f_{i}(X)$ be the decomposition into irreducible polynomials over $F$. Then we have the following.
\begin{itemize} 
\item[1.] The set of all isomorphism types of irreducible $F$-representations of $G$ is in a bijective correspondence with the set of all monic irreducible factors of $X^{n} - 1$.
\item[2.]
Let $\rho_{1}, \rho_{2}, \dots , \rho_{k}$ be the $k$ irreducible $F$-representations of $G$. They are defined by $\rho_{i}(x) = C_{f_{i}(X)}$, where $C_{f_{i}(X)}$ denotes the companion matrix of $f_{i}(X)$ over $F$.
\end{itemize}
\end{thm}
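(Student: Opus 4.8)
The plan is to translate the representation theory of the cyclic group $G = C_n = \langle x \mid x^n = 1 \rangle$ into the structure of the group algebra $F[G]$, and then read off both claims from the factorization of $X^n - 1$. The key observation is the ring isomorphism
\begin{equation*}
F[G] \cong F[X]/(X^n - 1),
\end{equation*}
which sends the generator $x$ to the residue class of $X$. This is well defined because $x$ satisfies $x^n = 1$, and it is an isomorphism because both sides are $F$-vector spaces of dimension $n$ and the map is a surjective algebra homomorphism. Under this identification, understanding the representations of $G$ amounts to understanding the modules over the quotient ring $F[X]/(X^n-1)$.

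Next I would invoke the hypothesis that $\operatorname{char}(F)$ is $0$ or prime to $n$, which guarantees that $X^n - 1$ is separable (its derivative $nX^{n-1}$ is coprime to it), so the irreducible factors $f_1, \dots, f_k$ are pairwise distinct. By the Chinese Remainder Theorem this yields a decomposition
\begin{equation*}
F[X]/(X^n - 1) \cong \prod_{i=1}^{k} F[X]/(f_i(X)),
\end{equation*}
where each factor $F[X]/(f_i(X))$ is a field extension of $F$ (since $f_i$ is irreducible), hence a simple algebra. This exhibits $F[G]$ as a semisimple algebra whose simple components are indexed precisely by the monic irreducible factors of $X^n - 1$. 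The simple modules of a product of fields are exactly the one obtained from each factor, and these are in bijection with the factors, giving claim (1): the isomorphism types of irreducible $F$-representations correspond bijectively to the monic irreducible factors of $X^n - 1$.

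For claim (2), I would make the correspondence explicit. The irreducible representation $\rho_i$ attached to $f_i$ is the action of $G$ on the simple module $V_i = F[X]/(f_i(X))$, which has dimension $\deg f_i$ over $F$. Here $x$ acts by multiplication by the class of $X$. Choosing the standard basis $\{1, \overline{X}, \overline{X}^2, \dots, \overline{X}^{\deg f_i - 1}\}$ of $V_i$, the matrix of multiplication by $\overline{X}$ is exactly the companion matrix $C_{f_i(X)}$, by the definition of the companion matrix as the matrix of the multiplication operator on $F[X]/(f_i(X))$. Hence $\rho_i(x) = C_{f_i(X)}$, as claimed.

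I expect the main obstacle to be the conceptual step of justifying that the representation attached to each field factor is actually irreducible and that distinct factors give non-isomorphic representations, rather than any computation. The cleanest way to handle this is to note that over a field extension $K = F[X]/(f_i)$ the module $V_i$ is one-dimensional over $K$ and hence simple as an $F[G]$-module, while non-isomorphism follows because $\rho_i(x)$ and $\rho_j(x)$ have different minimal polynomials ($f_i \neq f_j$) for $i \neq j$, so no conjugating change of basis can identify them. The separability of $X^n - 1$ is the hypothesis doing the real work throughout, ensuring the decomposition has no repeated factors and the algebra is genuinely semisimple.
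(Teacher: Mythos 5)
Your proposal is correct and follows essentially the same route as the paper: the isomorphism $F[G]\cong F[X]/\langle X^{n}-1\rangle$, separability from the hypothesis on $\operatorname{char}F$, the Chinese Remainder Theorem decomposition into fields $F[X]/\langle f_{i}(X)\rangle$, and the identification of $\rho_{i}(x)$ with the companion matrix via the basis $\{[1],[X],\dots,[X^{d-1}]\}$. Your added remarks on irreducibility and pairwise non-isomorphism (via distinct minimal polynomials) only make explicit points the paper leaves implicit.
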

\begin{proof} 
$(1)$ Consider the map $\phi: F[X] \longrightarrow F[G]$ given by: $f(X) \mapsto f(x)$. As $\phi$ is a ring epimorphism, $F[G] \simeq \frac{F[X]}{\langle{Ker(\phi)}\rangle}$. 
So, we get  
$$F[G] \simeq \frac{F[X]}{\langle X^{n} - 1 \rangle} = \displaystyle \frac{F[X]}{\langle \prod^{k}_{i = 1}f_{i}(X) \rangle}.$$
Since $F$ is a field of characteristic $0$ or prime to $n$, $X^{n} - 1$ is a separable polynomial with distinct roots.
Using Chinese Remainder Theorem, we can write
$$F[G] \simeq  \displaystyle \bigoplus^{k}_{i = 1}\frac{F[X]}{\langle f_{i}(X) \rangle}.$$
Under this isomorphism, the generator $x$ is mapped to the element 
$$(X+ \langle{f_{1}(X)}\rangle, X + \langle{f_{2}(X)}\rangle, \dots , X + \langle {f_{k}(X)}\rangle).$$ 
For each $i$, $1 \leq i \leq k$, $f_{i}(X)$ is an irreducible polynomial, so $\frac{F[X]}{\langle{f_{i}(X)}\rangle}$ is a field.
So, $G$ has $k$  irreducible representations.

\vskip2mm\noindent
$(2)$ Let $\rho_{i}$ be the irreducible representation corresponding to the simple component $\frac{F[X]}{\langle{f_{i}(X)}\rangle}$. Then $\rho_{i}(x)$: $\frac{F[X]}{\langle{f_{i}(X)}\rangle} \longrightarrow \frac{F[X]}{\langle{f_{i}(X)}\rangle}$ is defined by multiplication by $[X]$ $=$  $X + \langle{f_{i}(X)}\rangle$. Let $d$ be the degree of $f_{i}(X)$. Then $\{ [1], [X], \dots , [X^{d - 1}] \}$ is an ordered basis of $\frac{F[X]}{\langle{f_{i}(X)}\rangle}$, and w.r.t. this basis the matrix for $\rho_{i}(x)$ is $C_{f_{i}(X)}$, where $C_{f_{i}(X)}$ denotes the companion matrix of $f_{i}(X)$. 
This completes the proof.
\end{proof}
\begin{thm}\label{facy}
Let $G = C_{n} = \langle x \,|\, x^{n} = 1 \rangle$. Let $F$ be a field of characteristic either $0$ or prime to $n$. Let $\Phi_{n}(X)$ denote the $n$-th cyclotomic polynomial over $F$. Let $\Phi_{n}(X) = \displaystyle \prod^{k} _{i = 1}{f_{i}(X)}$ be the factorization into irreducible polynomials over $F$. Then we have the following.
\begin{itemize}
\item [1.] The set of all isomorphism types of faithful irreducible $F$-representations of $G$ is in a bijective correspondence with the set of all monic irreducible factors of $\Phi_{n}(X)$.
\item [2.] Let $\rho_{i}$ be the faithful irreducible $F$-representation corresponding to the irreducible factor $f_{i}(X)$. Then for each $i$, $\rho_{i}$ is defined by $x \mapsto C_{f_{i}(X)}$, where $C_{f_{i}(X)}$ denotes the companion matrix of $f_{i}(X)$, and the degree of $\rho_{i}$'s are same.
\end{itemize}
\end{thm}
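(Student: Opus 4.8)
The plan is to deduce this theorem from Theorem \ref{theo} by singling out, among all monic irreducible factors of $X^{n}-1$, exactly those that give rise to \emph{faithful} representations. First I would invoke the classical factorization $X^{n}-1 = \prod_{d \mid n} \Phi_{d}(X)$ into cyclotomic polynomials over $F$, which is valid because $\operatorname{char} F$ is $0$ or prime to $n$, so $X^{n}-1$ is separable and the $\Phi_{d}$ have pairwise disjoint root sets. By Theorem \ref{theo}, every monic irreducible factor of $X^{n}-1$ corresponds to an irreducible $F$-representation; since the $\Phi_{d}$ are pairwise coprime, each irreducible factor divides exactly one $\Phi_{d}$, and in particular the monic irreducible factors of $\Phi_{n}(X)$ form a distinguished subset of all the irreducible factors of $X^{n}-1$.

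For part (1), the key step is to characterize faithfulness in terms of this factorization. The representation $\rho_{i}$ attached to a factor $f_{i}$ acts by $x \mapsto C_{f_{i}(X)}$, that is, by multiplication by the class $[X]$ on the field $F[X]/\langle f_{i}(X)\rangle$. Hence $\ker \rho_{i}$ is trivial precisely when $[X]$ has multiplicative order exactly $n$ in this field. Now $[X]$ is a root of $f_{i}$, and $f_{i}$ divides a unique $\Phi_{d}$ with $d \mid n$, so $[X]$ is a primitive $d$-th root of unity and therefore has order exactly $d$. Consequently $\rho_{i}$ is faithful if and only if $d = n$, i.e.\ if and only if $f_{i}$ is one of the monic irreducible factors of $\Phi_{n}(X)$. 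This yields the asserted bijection.

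Part (2) gives the description $x \mapsto C_{f_{i}(X)}$ at once, again by Theorem \ref{theo}. The substantive point, and the step I expect to be the main obstacle, is showing that all the $\rho_{i}$ have equal degree, equivalently that all monic irreducible factors of $\Phi_{n}(X)$ over $F$ share a common degree. I would argue this by fixing a primitive $n$-th root of unity $\zeta$ in the algebraic closure $\overline{F}$; the roots of $\Phi_{n}$ are exactly the $\zeta^{j}$ with $\gcd(j,n) = 1$. For any such $j$, choosing $j'$ with $jj' \equiv 1 \pmod n$ gives $\zeta = (\zeta^{j})^{j'}$, whence $F(\zeta^{j}) = F(\zeta)$. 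Thus the minimal polynomial of each root $\zeta^{j}$ of $\Phi_{n}$ over $F$ has degree $[F(\zeta):F]$, independent of $j$. Since the degree of $\rho_{i}$ equals $\deg f_{i} = [F(\zeta^{j}):F] = [F(\zeta):F]$ for any root $\zeta^{j}$ of $f_{i}$, all the $\rho_{i}$ have the same degree, completing the argument.
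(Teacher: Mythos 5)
Your proof is correct, but it takes a genuinely different route from the paper's. The paper passes to the algebraic closure $\overline{F}$, writes $\overline{F}[G]\simeq\bigoplus_{i}\overline{F}[X]/\langle X-\zeta_n^i\rangle$, identifies the faithful $\overline{F}$-representations as those with $(i,n)=1$, and then descends to $F$ by summing Galois conjugates $\oplus_{\sigma\in\mathrm{Gal}(F(\eta_i)/F)}{}^{\sigma}\eta_i$, finishing with a base-change argument to show no faithful irreducible $F$-representation has been missed; Theorem \ref{theo} is invoked only for part (2). You instead stay entirely over $F$: you use the bijection of Theorem \ref{theo} between irreducible $F$-representations and monic irreducible factors of $X^n-1$, and you carve out the faithful ones by observing that $\rho_i$ is faithful iff the class $[X]$ has multiplicative order $n$ in $F[X]/\langle f_i\rangle$, which (by separability of $X^n-1$ and the coprimality of the $\Phi_d$) happens exactly when $f_i\mid\Phi_n$. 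This is cleaner and more self-contained for part (1), since it restricts an already-established bijection rather than rebuilding the representations from scratch, and it avoids Galois descent except for the one place it is genuinely needed: the equal-degree claim in part (2), which you settle correctly via $F(\zeta^j)=F(\zeta)$ for $\gcd(j,n)=1$ (the paper gets the same count from $|\mathrm{Gal}(F(\eta_i)/F)|=[F(\eta_i):F]=\deg f_i$). What the paper's approach buys in exchange is an explicit description of each $\rho_i$ as a sum of one-dimensional $\overline{F}$-representations, which matches the Galois-descent viewpoint used later for the idempotent computations; your approach buys economy and a faithfulness criterion stated intrinsically over $F$.
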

\begin{proof}
$(1)$ Let $\overline{F}$ be the algebraic closure of $F$. Then 
	$$\overline{F}[G] \simeq \frac{\overline{F}[X]}{\langle { X^{n} - 1} \rangle} \simeq \displaystyle{\bigoplus_{i = 0}^{n-1} \frac{\overline{F}[X]}{\langle X - \zeta^{i}_{n} \rangle}},$$ where $\zeta_{n}$ is a primitive $n^{th}$ root of unity in $\overline{F}$. 
	Notice that the simple components corresponding to the faithful irreducible $\overline{F}$-representations of $G$ are  $\frac{\overline{F}[X]}{\langle X - \zeta^{i}_{n} \rangle}$, $(i, n) = 1$. 
	 Let $\eta_{i}$ be the representation corresponding to the component $\frac{\overline{F}[X]}{\langle X - \zeta^{i}_{n} \rangle}$. Let $^\sigma{\eta_{i}}(g) = \sigma({\eta_{i}}(g))$, for all $g \in G$ and ${\sigma \in  \mathrm{Gal}(F(\eta_{i})/F)}$.
	Then for each $i$, $\displaystyle  \oplus_{\sigma \in  \mathrm{Gal}(F(\eta_{i})/F)}{^\sigma{\eta_{i}}}$ is an irreducible $F$-representation. As for each $i$, $^\sigma{\eta_{i}}$ is a faithful irreducible $\overline{F}$-representation, $\displaystyle  \oplus_{\sigma \in  \mathrm{Gal}(F(\eta_{i})/F)}{^\sigma{\eta_{i}}}$ is a faithful irreducible $F$-representation. Also for each $i$, $f_{i}(X)$ is a separable polynomial, which implies $|\mathrm{Gal}(F(\eta_{i})/F)| = [F(\eta_{i}):F] = \mathrm{deg}{f_{i}(X)}$. So $G$ has at least $k$ faithful irreducible $F$-representations. Let $\rho_{1}, \rho_{2}, \dots , \rho_{k}$ be those faithful irreducible $F$-representations of $G$. 
	It is clear that for each $i$, $\rho_{i}$ is determined by the irreducible factor $f_{i}(X)$ of $\Phi_{n}(X)$. It remains to show that these are all the faithful $F$-irreducible representations of $G$. If not, let $\rho$ be a faithful irreducible $F$-representation, which is different from $\rho_{1}, \rho_{2}, \dots , \rho_{k}$. Then $\rho\otimes_{{F}} {\overline{F}}$ is also a faithful $\overline{F}$-representation of $G$, and is the direct sum of some faithful $\overline{F}$-irreducible representations. So, some of the $\eta_{i}$'s must occur in the decomposition, and which is a contradiction. Thus $\rho_{1}, \rho_{2}, \dots , \rho_{k}$ are the only faithful irreducible $F$-representations of $G$. This completes the proof.\\
	$(2)$ Follows from Theorem $\ref{theo}$.
\end{proof}
\subsection{Primitive Central idempotents of Cyclic Groups}
Let $G = \langle {x | x^{n} = 1}\rangle$ be a cyclic group of order $n$. Let $F$ be a field of characteristic $0$ or prime to $n$. Then $F[G]$ is isomorphic to $F[X]/\langle X^{n} - 1 \rangle$. Let $X^{n} - 1 = \prod_{d | n}{\Phi_{d}(X)}$ be the ``universal" decomposition into cyclotomic polynomials. For $\mathbb{Q}$, it is the factorization into monic irreducible polynomials. In general, $\Phi_{d}(X)$ will decompose further. Let $X^{n} - 1 = (X-1)f_{2}(X)\dots f_{k}(X)$, with $f_{1}(X) = X - 1$, be the factorization into monic irreducible polynomials over $F$. Note that each $f_{i}(X)$ is a monic irreducible factor of $\Phi_{d}(X)$ for some $d | n$. Then $F[G]$ is abstractly isomorphic to the direct sum of $F$-algebras $F[X]/\langle f_{i}(X) \rangle, i = 1, 2, \dots , k$. For a fixed $i$, let $\zeta_{1}, \zeta_{2}, \dots , \zeta_{d}$ be the roots of $f_{i}(X)$ in $\overline{F}$. Then the primitive central idempotent corresponding to $f_{i}(X)$ is $\displaystyle \sum^{k}_{j = 1}e_{\zeta^{-1}_{j}x}$, where $e_{X} = (1 + X + \dots + X^{n-1})/n$ and $X$ is an indeterminate. By Newton's formulae, power sums can be expressed in terms of elementary symmetric functions,  which implies the coefficients of $x, x^2, \dots , x^{n-1}$ can be expressed in terms of the coefficients of the $f_{i}(X)$. So, computation of the primitive central idempotents of $F[G]$ reduces to factorization of $X^{n} - 1$ into irreducible polynomials over $F$.
\section{Representations and Primitive Central Idempotents of an Abelian Group}
Let $A$ be a finite dimentional semisimple $F$-algebra. By Artin-Wedderburn structure theorem on semisimple algebras, we have $A$ is abstractly isomorphic to direct sum of matrix algebras over finite dimentional division algebras over $F$. We define the {\it abelian} part of $A$ as the inverse image of direct sum of commutative simple components. We now state the following two propositions without proof 
\begin{prop}\rm
Let $G$ be a finite group. Let $H \leq G$. Let $F$ be a field of characteristic does not divide $|G|$. Let $e_{H} = \frac{1}{|H|}\sum_{h \in H}{h}$. Then $e_{H}$ is an idempotent in $F[G]$. Moreover, if $H \vartriangleleft G$, then $e_{H}$ is a central idempotent in $F[G]$.  
\end{prop}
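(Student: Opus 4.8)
The plan is to verify both assertions by a direct computation from the definition of $e_H$, the key combinatorial input being that multiplication (and conjugation) by a group element permutes the elements of a subgroup. Before anything else I would record the one place the hypothesis on the characteristic is used: since $H \leq G$, Lagrange's theorem gives that $|H|$ divides $|G|$, so the characteristic of $F$ does not divide $|H|$ either. Hence $|H|$ is a nonzero element of $F$ and $\tfrac{1}{|H|}$ is a well-defined scalar, which is all that is needed for $e_H$ to make sense in $F[G]$.

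To show $e_H$ is idempotent, I would expand the square and reindex. Writing $\sigma = \sum_{k \in H} k$, the computation is
\[
e_H^2 = \frac{1}{|H|^2}\sum_{h \in H}\sum_{h' \in H} h h' = \frac{1}{|H|^2}\sum_{h \in H}\sigma = \frac{1}{|H|^2}\,|H|\,\sigma = e_H,
\]
where the middle equality uses that for each fixed $h \in H$ the map $h' \mapsto h h'$ is a bijection of $H$ onto itself, so the inner sum equals $\sigma$ independently of $h$. This is the entire content of the first claim.

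For the central claim, assume $H \vartriangleleft G$. Since the elements $g \in G$ form an $F$-basis of $F[G]$, it suffices to show $e_H$ commutes with each such $g$, and for this I would verify the equivalent statement $g\, e_H\, g^{-1} = e_H$. Indeed
\[
g\, e_H\, g^{-1} = \frac{1}{|H|}\sum_{h \in H} g h g^{-1} = \frac{1}{|H|}\sum_{k \in H} k = e_H,
\]
the second equality being exactly normality: $g H g^{-1} = H$, so conjugation by $g$ permutes $H$ and again returns the sum $\sigma$. Rearranging gives $g\, e_H = e_H\, g$ for every $g \in G$, and extending by linearity shows $e_H$ lies in the center of $F[G]$.

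There is no genuine obstacle here; the result is elementary and the argument is purely a matter of reindexing group sums. The only point worth flagging is the invertibility of $|H|$, which is why the characteristic hypothesis (together with Lagrange) cannot be dropped, and the observation that checking centrality on the basis $\{g : g \in G\}$ reduces the second part to the single conjugation identity.
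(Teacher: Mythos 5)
Your proof is correct and is the standard argument: the paper itself states this proposition without proof, deferring to Milies--Sehgal (Lemma 3.3.6), where the same reindexing computation for idempotency and the conjugation argument $gHg^{-1}=H$ for centrality appear. Your added remark that Lagrange's theorem is what guarantees $|H|$ is invertible in $F$ is a correct and worthwhile clarification of where the characteristic hypothesis enters.
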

For the proof (see \cite{milies126}, Lemma $3.3.6$).
\begin{prop}\label{pro2}
Let $G$ be a finite group. Let $F$ be a field of characteristic $0$ or prime to $|G|$. Let $\Delta(G, G^{'}) = \sum_{ g \in G^{'}}{\alpha(g - 1)}$, where $\alpha \in F$. Then $F[G] = F[G]e_{G^{'}} \oplus \Delta (G, G^{'})$, where $F[G]e_G^{'} \cong F[G/G^{'}]$ is the sum of all commutative simple components of $F[G]$ and $\Delta (G, G^{'})$ sum of all the others.
\end{prop}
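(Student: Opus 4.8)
The plan is to build the splitting from the single central idempotent $e_{G'}$ and then match the two resulting summands to the commutative and non-commutative simple components by a group-homomorphism argument. Since $G' \trianglelefteq G$, the previous proposition guarantees that $e_{G'}$ is a central idempotent of $F[G]$; hence $1 = e_{G'} + (1 - e_{G'})$ is a decomposition into orthogonal central idempotents and $F[G] = F[G]e_{G'} \oplus F[G](1-e_{G'})$ splits as a direct sum of two-sided ideals. The first step is to identify the second ideal concretely with $\Delta(G,G')$, the relative augmentation ideal, i.e. the kernel of the natural projection $\omega \colon F[G] \to F[G/G']$. For this I would note that $h e_{G'} = e_{G'}$ for every $h \in G'$ (left multiplication by $h$ permutes $G'$), so each generator satisfies $(h-1)e_{G'} = 0$ and therefore $h - 1 = (h-1)(1-e_{G'}) \in F[G](1-e_{G'})$, giving $\Delta(G,G') \subseteq F[G](1-e_{G'})$; conversely $1 - e_{G'} = -\frac{1}{|G'|}\sum_{h \in G'}(h-1) \in \Delta(G,G')$, which yields the reverse inclusion.

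Next I would identify the complementary ideal. The map $\alpha \mapsto \alpha e_{G'}$ is a surjective $F$-algebra homomorphism $F[G] \to F[G]e_{G'}$ (using that $e_{G'}$ is central and idempotent) whose kernel is exactly $F[G](1-e_{G'}) = \Delta(G,G')$; hence $F[G]e_{G'} \cong F[G]/\Delta(G,G') \cong F[G/G']$. Because $G/G'$ is abelian, $F[G/G']$ is a commutative semisimple $F$-algebra, so it is a direct sum of fields, and consequently \emph{every} simple component of $F[G]e_{G'}$ is commutative. This already shows that $F[G]e_{G'}$ is contained in the abelian part of $F[G]$.

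The crux is the reverse inclusion: every commutative simple component of $F[G]$ must in fact lie inside $F[G]e_{G'}$. Let $K$ be such a component; being a commutative simple $F$-algebra it is a field, and let $\pi \colon F[G] \to K$ be the corresponding projection. Restricting $\pi$ to $G$ gives a group homomorphism $G \to K^{\times}$, since each $g \in G$ is a unit in $F[G]$ and hence $\pi(g)$ is a unit in $K$. As $K^{\times}$ is abelian, this homomorphism is trivial on the derived subgroup, so $\pi(h) = 1$ for all $h \in G'$, and therefore $\pi(e_{G'}) = \frac{1}{|G'|}\sum_{h \in G'}\pi(h) = 1$. Thus $e_{G'}$ acts as the identity on $K$, i.e. $K \subseteq F[G]e_{G'}$. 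Combining the two inclusions, $F[G]e_{G'}$ is precisely the sum of all commutative simple components, and $\Delta(G,G') = F[G](1-e_{G'})$ is the sum of all the remaining (non-commutative) ones.

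The main obstacle I anticipate is exactly this last matching step — ruling out that a commutative component could hide inside $\Delta(G,G')$. The idempotent splitting and the isomorphism $F[G]e_{G'} \cong F[G/G']$ are routine, but pinning down the abelian part relies on the clean observation that a field-valued algebra map must send $G$ into an abelian group and hence annihilate $G'$. Once that is in place the whole statement follows, and the hypothesis on $\operatorname{char} F$ enters only to ensure that $|G'|$ is invertible (so $e_{G'}$ is defined) and that $F[G]$ is semisimple, making the phrase ``simple component'' meaningful.
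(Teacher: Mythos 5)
Your argument is correct and complete: the idempotent splitting $F[G]=F[G]e_{G'}\oplus F[G](1-e_{G'})$, the identification $F[G](1-e_{G'})=\Delta(G,G')$ via $1-e_{G'}=-\frac{1}{|G'|}\sum_{h\in G'}(h-1)$, and the observation that any projection onto a commutative (hence field) component sends $G$ into an abelian unit group and therefore fixes $e_{G'}$, together pin down $F[G]e_{G'}$ as exactly the abelian part. The paper itself states this proposition without proof, referring to Milies--Sehgal (Proposition 3.6.11), and your argument is essentially the standard one given there, so there is nothing to fault.
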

For the proof (see \cite{milies126}, Proposition $3.6.11$).
\subsection{Representations of abelian groups}
Let $G$ be a finite group. Let $F$ be a field of characteristic $0$ or prime to $|G|$. Let $\Omega_{G, F}$ be the set of all the inequivalent irreducible $F$-representations of $G$. Let $\Omega^{0}_{G} = \{\rho \in \Omega_{G, F} | $\textrm{ the irreducible $F$-representation $\rho$ of $G$ s.t. Im$\rho$ is cyclic }$\}$. We divide the set $\Omega_{G, F}$ into two parts:
\begin{enumerate}
\item The irreducible $F$-representations with image is abelian.
\item The irreducible $F$-representations with image is non-abelian.
\end{enumerate}
Let $\Omega^{ab}_{G} = \{\rho \in \Omega_{G, F} | $\textrm{ the irreducible $F$-representation $\rho$ of $G$ s.t. Im$\rho$ is abelian}$\}$. 
\begin{prop}\label{ab1}
$\Omega^{o}_{G} = \Omega^{ab}_{G}$.
\end{prop}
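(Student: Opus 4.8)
The plan is to establish the two inclusions separately. The inclusion $\Omega^{o}_{G} \subseteq \Omega^{ab}_{G}$ is immediate: a cyclic group is abelian, so any irreducible $F$-representation with cyclic image has, a fortiori, abelian image. The content of the proposition lies in the reverse inclusion $\Omega^{ab}_{G} \subseteq \Omega^{o}_{G}$, for which I must show that an irreducible $F$-representation $\rho$ of $G$ whose image is abelian in fact has cyclic image.

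First I would reduce the problem to the image group itself. Setting $H = \operatorname{Im}(\rho)$, the representation $\rho$ factors through the isomorphism $G/\ker\rho \cong H$ and, viewed as a representation of $H$, is both faithful and irreducible. Thus it suffices to prove that a finite abelian group $H$ admitting a faithful irreducible $F$-representation must be cyclic.

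To analyze such an $H$, I would extend scalars to the algebraic closure $\overline{F}$, where irreducible representations of an abelian group are one-dimensional. Accordingly, $\rho \otimes_{F} \overline{F}$ decomposes as a direct sum of characters $\chi_{i} \colon H \to \overline{F}^{\times}$; by the standard structure of scalar extension of an $F$-irreducible representation, these $\chi_{i}$ constitute a single orbit under $\mathrm{Gal}(\overline{F}/F)$, each being a conjugate ${}^{\sigma}\chi$ of a fixed character $\chi = \chi_{1}$.

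The key step, which I expect to carry the whole argument, is the observation that Galois-conjugate characters share the same kernel: for $\sigma \in \mathrm{Gal}(\overline{F}/F)$ and $h \in H$, we have ${}^{\sigma}\chi(h) = \sigma(\chi(h)) = 1$ precisely when $\chi(h) = 1$, since $\sigma$ is injective and fixes $1$. Consequently $\ker\chi_{i} = \ker\chi$ for every $i$, so that
$$\ker\chi = \bigcap_{i}\ker\chi_{i} = \{e\},$$
the last equality being the faithfulness of $\rho$ as a representation of $H$. Hence $\chi$ is a faithful character, embedding $H$ into the multiplicative group $\overline{F}^{\times}$. Since every finite subgroup of the multiplicative group of a field is cyclic, $H \cong \operatorname{Im}(\chi)$ is cyclic, giving $\rho \in \Omega^{o}_{G}$. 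The only delicate point is justifying that the characters $\chi_{i}$ form a single Galois orbit with a common kernel; once this is secured, the cyclicity of finite multiplicative subgroups closes the argument at once.
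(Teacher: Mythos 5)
Your proof is correct, but it travels a different road than the paper's. The paper stays entirely over the ground field $F$: since $\rho$ has abelian image it factors through $G/G'$, hence corresponds (via Proposition \ref{pro2}) to a commutative simple component of $F[G]$; by Wedderburn such a component is a field, so $\operatorname{Im}\rho$ sits inside the multiplicative group of a field and is therefore cyclic. You instead pass to the algebraic closure: you reduce to showing that a finite abelian group $H$ with a faithful irreducible $F$-representation is cyclic, decompose $\rho\otimes_F\overline{F}$ into a Galois orbit of characters, observe that Galois-conjugate characters have identical kernels so that a single constituent $\chi$ is already faithful, and then embed $H$ into $\overline{F}^{\times}$. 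Both arguments terminate in the same fact (finite subgroups of the multiplicative group of a field are cyclic), but your version trades the structural input (the Wedderburn decomposition of the commutative part of $F[G]$) for the base-change theorem that the irreducible constituents of an irreducible $F$-representation over $\overline{F}$ form a single Galois orbit --- a standard result, valid here because $\mathrm{char}\,F$ is $0$ or prime to $|G|$ so the relevant character values are separable over $F$, but one you should cite or prove rather than merely flag as ``the only delicate point.'' The paper's route is shorter given that Proposition \ref{pro2} is already on the table; yours is self-contained modulo that one classical theorem and makes the role of faithfulness more transparent.
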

\begin{proof}
Let $\rho$ be an irreducible $F$-representation of $G$ with abelian image. Then $\rho$ factors through an ireeducible $F$-representation of $G/G^{'}$. By Proposition\ref{pro2}, Im$\rho$ is finite subgroup of a field, implies that Im$\rho$ is cyclic. 
So, $\Omega^{ab}_{G} \subseteq \Omega^{0}_{G}$. Clearly, if $\rho \in \Omega^{0}_{G}$, then Im$\rho$ is abelian, which implies that $\Omega^{0}_{G} \subseteq \Omega^{ab}_{G}$. This completes the proof.
\end{proof}
\begin{prop}\label{ab}
$\Omega^{0}_{G}$ parametrizes all the irreducible representation of $G/G^{'}$.
\end{prop}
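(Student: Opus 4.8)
The plan is to realize the desired parametrization as the classical correspondence between the representations of a quotient group and those representations of the ambient group whose kernel contains the relevant normal subgroup. Since Proposition \ref{ab1} already gives $\Omega^0_G = \Omega^{ab}_G$, it suffices to set up a bijection between $\Omega^{ab}_G$ and the set $\Omega_{G/G', F}$ of all irreducible $F$-representations of $G/G'$.

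First I would record the elementary but crucial observation that an irreducible $F$-representation $\rho$ of $G$ lies in $\Omega^{ab}_G$ precisely when $G' \le \ker \rho$. Indeed, if $\mathrm{Im}\,\rho$ is abelian then $\rho([g,h]) = [\rho(g), \rho(h)] = 1$ for all $g, h \in G$, so every commutator lies in $\ker \rho$ and hence $G' \le \ker \rho$; conversely, if $G' \le \ker \rho$ then $\mathrm{Im}\,\rho$ is a homomorphic image of the abelian group $G/G'$ and is therefore abelian. This is the step that ties the condition of having abelian image to the condition of factoring through $G/G'$.

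Next I would define the correspondence. Let $\pi : G \to G/G'$ be the canonical projection. Given $\bar\rho \in \Omega_{G/G', F}$, set $\Psi(\bar\rho) = \bar\rho \circ \pi$; since $\pi$ is surjective, $\bar\rho \circ \pi$ and $\bar\rho$ have the same image and the same invariant subspaces, so $\Psi(\bar\rho)$ is an irreducible $F$-representation of $G$ with abelian image, i.e. $\Psi(\bar\rho) \in \Omega^{ab}_G$. In the other direction, any $\rho \in \Omega^{ab}_G$ satisfies $G' \le \ker\rho$ by the previous step, so $\rho$ descends to a well-defined $\bar\rho$ on $G/G'$ with $\rho = \bar\rho \circ \pi$; because $\pi$ is surjective the subspaces invariant under $\rho$ and under $\bar\rho$ coincide, so $\bar\rho$ is irreducible. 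I would then check that these two assignments are mutually inverse and respect equivalence of representations: $\rho_1 \cong \rho_2$ via an intertwiner $T$ if and only if $\bar\rho_1 \cong \bar\rho_2$ via the same $T$, again because the $G$-action and the $G/G'$-action are carried out by the same matrices. This yields the claimed bijection on isomorphism types.

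The point requiring care, and the step I expect to be the main obstacle, is confirming that irreducibility and inequivalence are genuinely preserved in both directions; everything hinges on the surjectivity of $\pi$, which forces the lattice of invariant subspaces and the space of intertwiners to be identical for $\rho$ and $\bar\rho$. As a cross-check, and an alternative more structural route, I would note that Proposition \ref{pro2} identifies $F[G]e_{G'} \cong F[G/G']$ with the sum of all commutative simple components of $F[G]$; the irreducible representations attached to the commutative (field) components are exactly those whose image embeds in a field and is hence cyclic, i.e. exactly $\Omega^0_G$, so the simple components of $F[G/G']$ are in bijection with $\Omega^0_G$, recovering the statement.
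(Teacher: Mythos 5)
Your proposal is correct and follows essentially the same route as the paper: invoke Proposition \ref{ab1} to replace the cyclic-image condition by the abelian-image condition, and then use the standard correspondence between irreducible representations of $G/G'$ and irreducible representations of $G$ that factor through $\pi : G \to G/G'$. You simply make explicit the verifications (abelian image $\Leftrightarrow$ $G' \le \ker\rho$, preservation of irreducibility and of intertwiners under the surjection $\pi$) that the paper's proof leaves as ``clearly a bijective correspondence.''
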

\begin{proof}\rm
Let $\rho \in \Omega^{0}_{G}$. Then $\rho$ factors through an irreducible $F$-representation of $G/G^{'}$.
On the other hand, let $\rho$ be an irreducible $F$-representation of $G/G^{'}$, by Proposition\ref{ab1}, $\rho$ induces an irreducible $F$-representation of $G$ with cyclic image. Clearly, this is a bijective correspondence. This completes the proof.
\end{proof}
\begin{thm}
Let $G$ be a finite group of order $N$. 
For a divisor $d$ of $N$, let $H_{d} = \{ H \leq G | G/H \cong C_d \}$, and let $\Omega^{0}_{d} = \{ \textrm{faithful irreducible F-representations of $C_d$} \}$. Then
$\displaystyle{\cup_{d | N}(H_{d} \times \Omega^{0}_{d})}$ is in a bijective correspondence with $\Omega^{0}_{G}$.
\end{thm}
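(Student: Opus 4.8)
The plan is to set up the correspondence explicitly and verify it is a well-defined bijection. The key observation, supplied by Proposition \ref{ab} together with Proposition \ref{ab1}, is that $\Omega^{0}_{G}$ is exactly the set of irreducible $F$-representations of $G$ with cyclic (equivalently abelian) image, and these are precisely the representations that factor through $G/G'$. So I would first reduce the problem to describing the irreducible $F$-representations of the abelian quotient $G/G'$ in terms of pairs consisting of a subgroup and a faithful representation of a cyclic group.

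First I would fix a representation $\rho \in \Omega^{0}_{G}$. Since $\mathrm{Im}\,\rho$ is cyclic, say $\mathrm{Im}\,\rho \cong C_d$ for some $d \mid N$, let $H = \ker\rho \leq G$, so that $\rho$ induces a faithful irreducible $F$-representation $\overline{\rho}$ of $G/H \cong C_d$. This assignment $\rho \mapsto (H, \overline{\rho})$ lands in $H_d \times \Omega^{0}_d$ for the divisor $d$ determined by $\rho$, and hence in the disjoint union $\cup_{d \mid N}(H_d \times \Omega^{0}_d)$. The divisor $d$ is forced by $\rho$, so there is no ambiguity about which term of the union we land in; I would note that the union is genuinely disjoint because the isomorphism type of $G/H$ pins down $d$ uniquely.

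Next I would construct the inverse. Given a pair $(H, \psi)$ with $H \in H_d$ and $\psi \in \Omega^{0}_d$ a faithful irreducible $F$-representation of $C_d \cong G/H$, I would pull $\psi$ back along the quotient map $G \twoheadrightarrow G/H$ to obtain an irreducible $F$-representation $\rho$ of $G$. Because $\psi$ is faithful on $G/H$, the kernel of $\rho$ is exactly $H$, and its image is cyclic of order $d$; thus $\rho \in \Omega^{0}_{G}$. I would then check that the two assignments are mutually inverse: starting from $(H,\psi)$, pulling back and then extracting the kernel recovers $H$ and the induced representation recovers $\psi$; conversely, starting from $\rho$, forming $(\ker\rho, \overline{\rho})$ and pulling back recovers $\rho$ up to equivalence. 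The faithfulness condition built into $\Omega^{0}_d$ is what makes the kernel come out exactly equal to $H$ rather than something larger, and this is the step where the bookkeeping must be done carefully.

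The main obstacle, I expect, is verifying well-definedness up to equivalence on both sides and confirming that the map is a genuine bijection of \emph{isomorphism types} rather than of representations. Specifically, two equivalent representations $\rho \cong \rho'$ must yield the same subgroup $H = \ker\rho = \ker\rho'$ (immediate, since equivalent representations have equal kernels) and equivalent induced representations on $G/H$; conversely the pullback must send equivalent faithful representations of $C_d$ to equivalent representations of $G$. None of this is deep, but it is where one must be precise: the content of the theorem is really the clean separation of the data of a cyclic-image representation into ``which cyclic quotient'' (the subgroup $H$, equivalently the divisor $d$) and ``which faithful representation of that fixed cyclic group'' (the element of $\Omega^{0}_d$), and the proof is essentially the universal property of the quotient combined with the faithfulness normalization.
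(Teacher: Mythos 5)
Your proposal is correct and takes essentially the same approach as the paper: the paper's (much terser) proof also sends $\rho\in\Omega^{0}_{G}$ to the faithful irreducible representation it induces on the cyclic quotient $G/\ker\rho$ and inverts by pulling back along the quotient map. Your version simply spells out the kernel/faithfulness bookkeeping and the well-definedness on equivalence classes that the paper leaves implicit.
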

\begin{proof}
Let $\rho \in \Omega^{0}_{G}$. Then $\rho$ factors through a faithful irreducible $F$-representation of cyclic quotient. So, $\rho$ corresponds to an element in $\displaystyle{\cup_{d | N}({H_{d}} \times \Omega^{0}_{d})}$. On the other hand, take an element in $\displaystyle{\cup_{d | N}({H_{d}} \times \Omega^{0}_{d})}$, then corresponding to that there is an element in $\Omega^{0}_{G}$. Clearly, this correspondence is bijective. This completes the proof. 
\end{proof}
\begin{cor}\label{coro}
Let $G$ be a finite abelian group. 
Then $\Omega_{G, F} = \Omega^{0}_{G}$.
\end{cor}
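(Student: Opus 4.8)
The plan is to deduce the corollary directly from the preceding structural results about $\Omega^{0}_{G}$ together with the defining property of abelian groups. Recall that $\Omega_{G,F}$ is by definition the set of all inequivalent irreducible $F$-representations of $G$, while $\Omega^{0}_{G}$ consists of those irreducible representations whose image is cyclic, and by Proposition \ref{ab1} this coincides with $\Omega^{ab}_{G}$, the representations with abelian image. Since $\Omega^{0}_{G} \subseteq \Omega_{G,F}$ always holds trivially, the entire content of the corollary reduces to establishing the reverse inclusion $\Omega_{G,F} \subseteq \Omega^{0}_{G}$ when $G$ is abelian.

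First I would observe the key point: when $G$ is abelian, \emph{every} irreducible $F$-representation of $G$ automatically has abelian image. Indeed, the image $\mathrm{Im}\,\rho$ is a homomorphic image of the abelian group $G$, hence is itself abelian, regardless of whether $\rho$ is faithful. Thus every $\rho \in \Omega_{G,F}$ lies in $\Omega^{ab}_{G}$, giving $\Omega_{G,F} \subseteq \Omega^{ab}_{G} = \Omega^{0}_{G}$ by Proposition \ref{ab1}.

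Combining the two inclusions yields $\Omega_{G,F} = \Omega^{0}_{G}$, as claimed. An equivalent and perhaps cleaner phrasing notes that for an abelian group the derived subgroup $G^{'}$ is trivial, so $G/G^{'} = G$; by Proposition \ref{ab} the set $\Omega^{0}_{G}$ parametrizes all irreducible representations of $G/G^{'} = G$, which is precisely $\Omega_{G,F}$.

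I do not anticipate any genuine obstacle here, since this is a formal consequence of the established propositions; the only thing worth stating carefully is that the abelian-image property is inherited by \emph{all} irreducibles of an abelian group (not merely the faithful ones), which is what forces the inclusion $\Omega_{G,F} \subseteq \Omega^{ab}_{G}$ to be an equality rather than a proper containment. The bulk of the work has already been done in Propositions \ref{ab1} and \ref{ab}, so the proof of the corollary is a short two-line deduction.
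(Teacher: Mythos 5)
Your argument is correct and is essentially identical to the paper's own proof: both use Proposition \ref{ab1} to identify $\Omega^{0}_{G}$ with $\Omega^{ab}_{G}$ and then observe that for abelian $G$ every irreducible representation has abelian image, so $\Omega_{G,F} = \Omega^{ab}_{G}$. Your added remark via Proposition \ref{ab} and $G' = \langle e \rangle$ is a harmless rephrasing of the same deduction.
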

\begin{proof}
By Proposition\ref{ab1}, $\Omega^{0}_{G} = \Omega^{ab}_{G}$. As $G$ is abelian, $\Omega_{G, F} = \Omega^{ab}_{G}$, which implies that $\Omega_{G, F} = \Omega^{0}_{G}$.
\end{proof}


\subsection{Primitive central idempotents of abelian group algebras}
\begin{definition}\rm
Let $N \vartriangleleft G$. Let $\phi : F[G] \longmapsto F[G/N]$ be the projection map. Let $\overline{e} \in F[G/N]$, then $e \in F[G]$ is called a {\it lift} of the element $\overline{e}$ if $\phi(e) = \overline{e}$, and $e_{N}e \in F[G]$ is called the {\it pull back} of $\overline{e}$.
\end{definition}

Let $G$ be an abelian group. By Corollary\ref{coro}, we get that every primitive central idempotent of $F[G]$ is pull back of the primitive central idempotent in $F[G/K]$ corresponding to a {\it faithful} irreducible representation of $G/K$, where $G/K$ is isomorphic to a cyclic group. Let $G/K = \langle{ \overline{x} | {\overline{x}}^{n} = 1 }\rangle \simeq C_{n}$, where $\overline{x} = xK$. Let $e = \sum_{i = 0}^{n-1}{\alpha_{i}{\overline{x}}^{i}}$ be a primitive central idempotent in $F[G/K]$ corresponding to a {\it faithful} irreducible representation of $G/K$. Then the pull back of $e = \sum_{i = 0}^{n-1}{\alpha_{i}{\overline{x}}^{i}}$, that is, $e_{K}\sum_{i = 0}^{n-1}{\alpha_{i}{{x}}^{i}}$, where $e_{K} = \frac{1}{|K|}\sum_{k \in K}k$, is a primitive central idempotent of $F[G]$. Later using long generators, we shall give further factorization of $e_{K}\sum_{i = 0}^{n-1}{\alpha_{i}{{x}}^{i}}$. Thus the problem reduces to computing the primitive central idempotents of a cyclic group as we have described in Section\ref{cypi}.
\section{Representations of a finite solvable group over $\mathbb{C}$}
The following theorem is known, we call this theorem index-$p$ theorem. Using this theorem and a long presentation, one can iductively construct the irreducible representations of a finite solvable group over $\mathbb{C}$.
\begin{thm}[\cite{alg111}, Theorem $13.52$] (Index-$p$ theorem)\label{index}
Let $G$ be a group, $H$ a normal subgroup of index $p$, 
$p$ a prime. 
Let $\eta$ be an irreducible representation of $H$ over $\mathbb{C}$.
\begin{enumerate}
\item If the $G/H$-orbit of $\eta$ is a singleton, then $\eta$ extends
to $p$ mutually inequivalent representations 
$\rho_1, \rho_2, \ldots, \rho_p$  of $G$.
\item If the $G/H$-orbit of $\eta$ consists of $p$ points 
$\eta = \eta_1, \eta_2,\ldots, \eta_p$
then the induced representations $\eta_1\uparrow_H^G$, 
$\eta_2\uparrow_H^G,\ldots, \eta_p\uparrow_H^G$, are equivalent,
say $\rho$ and $\rho$ is irreducible.
\end{enumerate}
\end{thm}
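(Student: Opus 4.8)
The plan is to recognize this as the index-$p$ case of Clifford theory and to organize everything around a single transversal element. Since $H \trianglelefteq G$ with $[G:H]=p$, the quotient $G/H$ is cyclic of prime order, so I would fix $t \in G$ with $tH$ generating $G/H$; then $G = \bigsqcup_{i=0}^{p-1} t^i H$ and $t^p \in H$. Conjugation permutes the isomorphism types of irreducible representations of $H$, and since inner automorphisms of $H$ fix these types, the action descends to $G/H \cong C_p$. A group of prime order has only orbits of size $1$ or $p$, and this is exactly the dichotomy in the statement, so the two cases are exhaustive.

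For part (1) the orbit is a singleton, i.e. ${}^{t}\eta \cong \eta$, so Schur's lemma gives an invertible intertwiner $T$ with $T\,\eta(t^{-1}ht)\,T^{-1} = \eta(h)$ for all $h \in H$. I would then look for an extension of the form $\rho(h)=\eta(h)$, $\rho(t)=\lambda T$, where $\lambda \in \mathbb{C}^{\times}$ is to be chosen. Iterating the intertwining relation gives $\eta(t^{-p}ht^{p}) = T^{-p}\eta(h)T^{p}$, while $t^p \in H$ forces $\eta(t^{-p}ht^{p}) = \eta(t^p)^{-1}\eta(h)\eta(t^p)$; comparing the two shows $\eta(t^p)T^{-p}$ centralizes $\eta(H)$, so by Schur $T^p = c\,\eta(t^p)$ for some $c \in \mathbb{C}^{\times}$. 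The consistency requirement $\rho(t)^p = \rho(t^p) = \eta(t^p)$ then reads $\lambda^p c = 1$, an equation with exactly $p$ solutions over $\mathbb{C}$, differing by $p$-th roots of unity. Each choice $\lambda_k$ yields a homomorphism $\rho_k$: one checks the two defining relations $\rho_k(t)\eta(h)\rho_k(t)^{-1} = \eta(tht^{-1})$ and $\rho_k(t)^p = \eta(t^p)$, which suffice because $G$ is generated by $H$ and $t$ subject to these. Since $\rho_k|_H = \eta$ is irreducible, each $\rho_k$ is irreducible, and any $G$-intertwiner between $\rho_j$ and $\rho_k$ restricts to a scalar on $H$ by Schur, forcing $\lambda_j = \lambda_k$; hence the $\rho_k$ are mutually inequivalent (and, since $\rho(t)$ is pinned down up to scalar, these exhaust the extensions).

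For part (2) the orbit has $p$ points $\eta = \eta_1, \dots, \eta_p$ with $\eta_i = {}^{t^{i-1}}\eta$ pairwise inequivalent. I would first invoke the standard fact that for a normal subgroup induction is insensitive to $G$-conjugation of the inducing representation, so $\eta_i\uparrow_H^G \cong \eta\uparrow_H^G =: \rho$ for every $i$. To prove $\rho$ irreducible I would compute $\langle \rho, \rho\rangle_G$ by Frobenius reciprocity together with Mackey's formula: because $H$ is normal, $\rho|_H \cong \bigoplus_{i} \eta_i$, whence $\langle \rho,\rho\rangle_G = \langle \eta, \rho|_H\rangle_H = \sum_i \langle \eta, \eta_i\rangle_H = 1$, the last equality holding since the $\eta_i$ are pairwise inequivalent irreducibles. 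Thus $\rho$ is irreducible.

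I expect the main obstacle to be the extension step in part (1): identifying $T^p$ as a scalar multiple of $\eta(t^p)$ via Schur's lemma and then extracting a $p$-th root over $\mathbb{C}$ to realize precisely $p$ distinct extensions. Once this is in place, the well-definedness of each $\rho_k$ and the Frobenius–Mackey inner-product computation in part (2) are routine.
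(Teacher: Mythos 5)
Your argument is correct, but note that the paper itself gives no proof of this theorem: it is quoted as a known result (the ``index-$p$ theorem'') with a citation to B\"urgisser--Clausen--Shokrollahi, Theorem 13.52, so there is no in-paper proof to compare against. What you have written is the standard Clifford-theoretic treatment of a normal subgroup of prime index, and all the key steps check out: the orbit dichotomy from $|G/H|=p$ prime; in case (1) the identification $T^p=c\,\eta(t^p)$ via Schur's lemma applied to $\eta(t^p)T^{-p}$, the equation $\lambda^p c=1$ yielding exactly $p$ extensions, and the scalar-intertwiner argument for their pairwise inequivalence; in case (2) the invariance of induction under conjugation of the inducing representation together with $\langle\rho,\rho\rangle_G=\sum_i\langle\eta,\eta_i\rangle_H=1$. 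This supplies a complete, self-contained proof of a statement the paper leaves to the literature.
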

\begin{cor}
Let $G$ be a solvable group. 
Then $G$ has a maximal 
	subnormal series such that the successive quotients are 
	isomorphic to cyclic groups of prime order. The last but one term in 
	the maximal subnormal series is a cyclic group of prime order. 
	So all its irreducible 
	$F$-representations are of degree one. So starting with degree one 
	representations of subgroups, we can build all the irreducible representations of $G$ by the processes of extension and induction.
\end{cor}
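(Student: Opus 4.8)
The plan is to prove the three assertions in turn and then to set up the induction that yields the final construction. First I would recall from Section 2 that a finite solvable group $G$ of order $N = p_1 p_2 \cdots p_n$ admits a subnormal series
\[
\langle e \rangle = G_0 \trianglelefteq G_1 \trianglelefteq \cdots \trianglelefteq G_n = G
\]
with $G_i/G_{i-1} \cong C_{p_i}$ for each $i$; since the successive quotients are simple (being cyclic of prime order), this is a maximal subnormal series, which is what the first assertion demands. Read from the top, the last-but-one term is $G_1$, and because $G_0 = \langle e \rangle$ we have $G_1 = G_1/G_0 \cong C_{p_1}$, a cyclic group of prime order. As $C_{p_1}$ is cyclic (hence abelian) and $F = \mathbb{C}$ is algebraically closed, every irreducible $\mathbb{C}$-representation of $G_1$ is one-dimensional; this disposes of the second and third assertions and, more importantly, furnishes the base case of the induction.

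For the main construction I would induct on $i$, proving that the irreducible $\mathbb{C}$-representations of $G_i$ can all be produced from those of $G_{i-1}$ by the two operations of the Index-$p$ theorem (Theorem \ref{index}). Fix an irreducible representation $\eta$ of $G_{i-1}$ and let $G_i/G_{i-1} \cong C_{p_i}$ act by conjugation on the isomorphism classes of such representations. If the orbit of $\eta$ is a singleton, Theorem \ref{index}(1) extends $\eta$ to $p_i$ pairwise inequivalent representations of $G_i$; if the orbit consists of $p_i$ elements, Theorem \ref{index}(2) shows that the common induced representation $\eta \uparrow_{G_{i-1}}^{G_i}$ is irreducible. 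Running this over all orbits of irreducibles of $G_{i-1}$ produces a list of irreducible representations of $G_i$.

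The step I expect to be the main obstacle is \emph{completeness}: one must show that every irreducible $\mathbb{C}$-representation $\rho$ of $G_i$ actually occurs in this list, so that nothing is missed. Here I would invoke Clifford theory relative to the normal subgroup $G_{i-1}$ of prime index: the restriction $\rho|_{G_{i-1}}$ decomposes with irreducible constituents forming a single $G_i/G_{i-1}$-orbit, so $\rho$ shares a constituent $\eta$ with exactly one orbit, and by Frobenius reciprocity $\rho$ is then forced to be one of the extensions or the induced representation attached to that orbit by Theorem \ref{index}. A clean way to certify that no representation is counted twice or omitted is a dimension count: checking that the squares of the degrees of the constructed representations sum to $|G_i|$ pins down the list exactly. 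With completeness in hand at each stage, the induction terminates at $G_n = G$, and every irreducible representation of $G$ has been assembled from the one-dimensional representations of $G_1$ by a finite sequence of extensions and inductions, as claimed.
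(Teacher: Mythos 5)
Your proposal is correct and follows the same route the paper intends: the corollary is stated without a separate proof, its justification being precisely the inductive application of the Index-$p$ theorem (Theorem \ref{index}) along the long subnormal series, starting from the degree-one representations of the bottom term $G_1 \cong C_{p_1}$. Your added completeness argument via Clifford theory and Frobenius reciprocity (that every irreducible of $G_i$ restricts to a single orbit of irreducibles of $G_{i-1}$ and hence appears among the constructed extensions or inductions) supplies a detail the paper leaves implicit, and is correct.
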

\section{Primitive central idempotents of complex group algebra of a finite solvable group}
The following theorem is due to Berman (see \cite{berman120}). Here, we give an elegant proof of the theorem. For a finite solvable group $G$, using the following theorem and a long system of  generators, one can iductively construct the primitive central idempotents of $\mathbb{C}[G]$. 
 \begin{thm}\label{berman}
Let $G$ be a finite group and $H$ be a normal subgroup of index $p$, a prime. Let $G/H = \langle {\overline{x}} \rangle$, for some $x$ in $G$. Let $\overline{C}(x)$ be the conjugacy class sum of $x$ in $\mathbb{C}[G]$. Let $(\eta, W)$ be an irreducible representation of $H$ over $\mathbb{C}$ and $e_{\eta}$ be its corresponding primitive central idempotent in $\mathbb{C}[H]$. 
We distinguish two cases:
\begin{itemize}
\item[(1)] If $e_{\eta}$ is a central idempotent in $\mathbb{C}[G]$, 
then $\eta$ extends to $p$ distinct irreducible representations
$\rho_{0}, \rho_{1}, \dots , \rho_{p-1}$ (say) of $G$. 
Moreover, $x$ can be chosen s.t. $(\overline{C}(x))^{p}{e_{\eta}} = {\lambda}{e_{\eta}}$, where $\lambda \neq 0$.
For each $i$, 
$0 \leq i \leq p-1$, let $e_{\rho_{i}}$ be the primitive central idempotent corresponding to the representation $\rho_{i}$. Then
$${e_{\rho_{i}} = \frac{1}{p} 
\Big{(} 1 + \zeta^ic + \zeta^{2i} c^2 +\cdots +\zeta^{i(p-1)}c^{p-1}\Big{)} 
e_{\eta},}$$
where $c = \frac{\overline{C}{(x)}{e_{\eta}} }{\sqrt[p]{\lambda}}$ and $\zeta$ 
is a primitive $p^{th}$ root of unity in $F$.  
Moreover, 
$$e_{\eta} = e_{\rho_0} + e_{\rho_1} + \cdots + e_{\rho_{p-1}}.$$

\item[(2)] If $e_{\eta}$ is not a central idempotent in $\mathbb{C}[G]$,
then 
$\eta\uparrow_H^G, \eta^{x}\uparrow_H^G, \ldots ,\eta^{x^{p-1}}\uparrow_H^G$ 
are all equivalent to an irreducible representation $\rho$ (say) of $G$ over $\mathbb{C}$ and in this case,
$$e_{\rho} = e_{\eta} + e_{\eta^{x}} + \cdots + e_{\eta^{x^{p-1}}}.$$
\end{itemize}
\end{thm}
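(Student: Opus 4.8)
The plan is to run both cases off the Index-$p$ theorem, after first translating the algebraic dichotomy ``$e_\eta$ central versus not central in $\mathbb{C}[G]$'' into the orbit dichotomy of Theorem \ref{index}. The bridge is the conjugation identity $g\,e_\eta\,g^{-1} = e_{\eta^g}$ for $g\in G$, where $\eta^g(h)=\eta(g^{-1}hg)$; since the primitive central idempotents of $\mathbb{C}[H]$ biject with the irreducibles of $H$, the idempotent $e_\eta$ is central in $\mathbb{C}[G]$ exactly when $\eta^g\cong\eta$ for all $g$, i.e. exactly when the $G/H$-orbit of $\eta$ is a singleton. Thus case (1) is the extension case and case (2) the induction case of Theorem \ref{index}, and in each case I may take the representation-theoretic conclusion (extension to $p$ inequivalent $\rho_i$, respectively equivalence of the induced representations to one irreducible $\rho$) as given.

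For case (1) the first observation is that $\gamma:=\overline{C}(x)e_\eta$ is central in $\mathbb{C}[G]$, being a product of the central class sum $\overline{C}(x)$ with the (now central) idempotent $e_\eta$. Next I would record two standard consequences of Clifford--Gallagher theory in the extension case: the $p$ extensions are the twists $\rho_i=\rho_0\otimes\psi^i$, where $\psi$ generates the character group of $G/H\cong C_p$ with $\psi(\overline{x})=\zeta$, and $e_\eta=\sum_{i=0}^{p-1}e_{\rho_i}$ (the blocks of $G$ lying over $\eta$ are exactly these). To locate $\gamma$ among the $e_{\rho_i}$ I would use the algebra automorphism $T_\psi$ of $\mathbb{C}[G]$ sending $g\mapsto\psi(\overline{g})\,g$: it fixes $\mathbb{C}[H]$ pointwise (so fixes $e_\eta$), scales $\overline{C}(x)$ by $\zeta$ (all conjugates of $x$ lie in the coset $xH$ because $G/H$ is abelian), and cyclically permutes the $e_{\rho_i}$. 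Writing $\gamma=\sum_i\omega_i e_{\rho_i}$, the relation $T_\psi(\gamma)=\zeta\gamma$ forces $\omega_i=\zeta^i\omega_0$, whence $\gamma^p=\sum_i\omega_i^p e_{\rho_i}=\omega_0^p\,e_\eta$; this is the asserted identity with $\lambda=\omega_0^p$, and it holds for \emph{every} lift $x$.

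The delicate point, and the step I expect to be the main obstacle, is the clause that $x$ can be chosen so that $\lambda\neq0$. Since $\gamma=\omega_0\sum_i\zeta^i e_{\rho_i}$, one has $\lambda\neq0$ iff $\gamma\neq0$ iff $\overline{C}(x)e_\eta\neq0$, so it suffices to produce one lift $x$ of a generator of $G/H$ with $\overline{C}(x)e_\eta\neq0$. I would argue that $\chi_{\rho_0}$ cannot vanish on all of $G\setminus H$: if it did, then $\langle\chi_{\rho_0},\chi_{\rho_0}\rangle_G=\tfrac{|H|}{|G|}\langle\chi_\eta,\chi_\eta\rangle_H=1/p$, contradicting irreducibility of $\rho_0$. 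Hence some $g\in G\setminus H$ has $\chi_{\rho_0}(g)\neq0$; its image generates $G/H$, and taking $x:=g$ makes the central scalar $\omega_0=|\Cl(x)|\,\chi_{\rho_0}(x)/\chi_{\rho_0}(1)$ nonzero. (A character-free variant: the $G$-class sums span $Z(\mathbb{C}[G])$ and project onto the $p$-dimensional center $\bigoplus_i\mathbb{C}e_{\rho_i}$ of $e_\eta\mathbb{C}[G]$, whereas class sums supported in $H$ land in the one-parameter family $\mathbb{C}e_\eta$, so some class sum from outside $H$ must act nontrivially.) With such an $x$ fixed, set $c=\gamma/\sqrt[p]{\lambda}$ with the root chosen as $\sqrt[p]{\lambda}=\omega_0$; then $c^p=e_\eta$ and $c$ acts as a primitive $p$th root of unity on each block. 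With the labelling chosen so that $c\,e_{\rho_i}=\zeta^{-i}e_{\rho_i}$, the discrete Fourier inversion $\frac1p\sum_{j=0}^{p-1}\zeta^{ij}c^j=\sum_k\big(\frac1p\sum_j\zeta^{(i-k)j}\big)e_{\rho_k}=e_{\rho_i}$ yields the stated formula (the $j=0$ term being $c^0=e_\eta$), and summing over $i$ recovers $e_\eta=\sum_i e_{\rho_i}$.

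Case (2) is comparatively routine. Here $f:=\sum_{i=0}^{p-1}e_{\eta^{x^i}}$ is a sum of orthogonal idempotents, hence idempotent, and conjugation by $x$ cyclically permutes its summands, so $f$ is central in $\mathbb{C}[G]$. It remains to identify $f$ with $e_\rho$. Because $H\trianglelefteq G$, the induced character $\chi_\rho=\mathrm{Ind}_H^G\chi_\eta$ vanishes off $H$, where it equals $\sum_i\chi_{\eta^{x^i}}$; substituting into $e_\rho=\tfrac{\chi_\rho(1)}{|G|}\sum_{g}\chi_\rho(g^{-1})g$ and using $\chi_\rho(1)=p\,\chi_\eta(1)$ together with $|G|=p|H|$ collapses the expression to $\sum_i e_{\eta^{x^i}}=f$, as claimed. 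This completes the plan.
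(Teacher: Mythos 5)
Your proposal is correct and its overall architecture matches the paper's: both reduce the dichotomy to the index-$p$ theorem via $ge_\eta g^{-1}=e_{\eta^g}$, both prove $\lambda\neq 0$ by showing the character of an extension cannot vanish on all of $G\setminus H$ (you compute $\langle\chi_{\rho_0},\chi_{\rho_0}\rangle_G=1/p$; the paper instead notes $e_{\rho}=\tfrac1pe_\eta$ would fail to be idempotent --- the same obstruction), both obtain the formula for $e_{\rho_i}$ by a discrete Fourier computation with $c$, and case (2) is handled identically through the vanishing of the induced character off $H$. The one genuinely different ingredient is how you establish the scalar relation $(\overline{C}(x))^pe_\eta=\lambda e_\eta$. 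The paper observes that $(\overline{C}(x))^p=x^pa$ with $a\in\mathbb{C}[H]$ and is central in $\mathbb{C}[H]$, then applies Schur's lemma to $\eta$; you instead invoke Gallagher's description $\rho_i=\rho_0\otimes\psi^i$ and the twisting automorphism $T_\psi:g\mapsto\psi(\overline{g})g$, which fixes $e_\eta$, scales $\overline{C}(x)$ by $\zeta$, and cyclically permutes the $e_{\rho_i}$, forcing $\gamma=\overline{C}(x)e_\eta$ to have eigenvalues $\zeta^i\omega_0$ on the blocks. Your route costs an extra citation (Gallagher/Clifford) but buys more: it identifies $\lambda=\omega_0^p$ explicitly and shows directly that $c$ acts on the $p$ blocks by the $p$ \emph{distinct} $p$-th roots of unity, so the Fourier sums are immediately seen to be the block idempotents; the paper must instead verify orthogonality of the $f_i$ by hand and then appeal to the count of primitive central idempotents in $\mathbb{C}[G]e_\eta$. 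Both arguments pin down the $e_{\rho_i}$ only up to a relabelling of the index $i$, which is all the statement requires.
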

\begin{proof}
 $(1)$ Since $e_{\eta}$ is a central 
idempotent in $\mathbb{C}[G]$, which implies that $\eta \cong \eta^{x}$. By Theorem $\ref{index}$, $\eta$ extends to $p$ distinct irreducible representations $\rho_{0}, \rho_{1}, \dots ,$ $\rho_{p-1}$ (say) of $G$. By Frobenius reciprocity theorem, the induced representation 
${\eta}\uparrow^{G}_{H} \cong \rho_{0} \oplus \rho_{1} \oplus \dots \oplus \rho_{p-1}$, which implies that 
$\mathbb{C}[G]e_{\eta} = \bigoplus^{p-1}_{i = 0} \mathbb{C}[G]e_{\rho_{i}}$ and $e_{\eta} = \sum^{p-1}_{i = 0}e_{\rho_{i}}$. Note that $\mathbb{C}[G]e_{\eta}$ is a ring with identity $e_{\eta}$. Notice that $\mathbb{C}[G]e_{\eta}$ is the direct sum of $p$ minimal two-sided ideals $\mathbb{C}[G]e_{\rho_{i}}, i = 0,1, \dots , p-1$, and therefore $\mathbb{C}[G]e_{\eta}$ contains $p$ primitive central idempotents. 

Since $G/H=\langle xH\rangle$, for some $x\in G$, this implies that $x^p\in H$.
Then one can show that $(\overline{C}_G(x))^p=x^pa$ for some 
$a\in \mathbb{C}[H]$. It follows that $(\overline{C}_G(x))^p$ is central element in
$\mathbb{C}[H]$. Hence, by Schur's Lemma,
$$(\overline{C}_G(x))^pe_{\eta}=\lambda e_{\eta}, \hskip5mm \textrm{for some}\, \lambda \in
\mathbb{C}.$$
Note that $\lambda$ depends on chosen $x$.
\vskip2mm\noindent
{\it Claim:} The element $x\in G-H$ can be chosen in such a
way that $\lambda \neq 0$.
\vskip2mm\noindent
To prove the claim, let $\rho$ be an extension of $\eta$, with corresponding
character $\chi_{\rho}$ and the primitive central idempotent $e_{\rho}$.
If $\chi_{\rho}$ vanishes on $G-H$, then
$$e_{\rho} = \frac{1}{|G|}\sum_{g\in G}
\chi_{\rho}(g^{-1})g = \frac{1}{p}\frac{1}{|H|}\sum_{h\in H}
\chi_{\eta}(g^{-1})g = \frac{1}{p}e_{\eta},$$
and this implies that $e_{\rho}$ is {\it not} an idempotent, a contradiction. Thus there exists
$x\in G-H$ such that $\chi_{\rho}(x)\neq 0$.

If $V$ is the representation
space corresponding to $\eta$, then it is also a representation space for the
extension $\rho$. Extend $\rho:G\rightarrow {\rm GL}(V)$ linearly  to algebra
homomorphism  $\mathbb{C}[G]\rightarrow {\rm End}(V)$, which we still denote by
$\rho$. Similarly, extend $\eta$ to the algebra homomorphism
$\eta:\mathbb{C}[H]\rightarrow {\rm End}(V)$.
Now by Schur's Lemma, $\rho(\overline{C}_G(x))=\mu I$ for
some $\mu \in \mathbb{C}$. Taking traces of both sides, we get
$$|C_G(x)|\chi_{\rho}(x)=\mu \deg\rho.$$
Since $\chi_{\rho}(x)\neq 0$, we get $\mu \neq 0$. Then
$$\lambda I = \rho(\lambda e_{\eta})=\rho((\overline{C}_G(x))^pe_{\eta})=
\rho( (\overline{C}_G(x) e_{\eta})^p)=\mu^pI\neq 0,$$
hence $\lambda \neq 0$. This proves the Claim.

For $i=0,1, \dots, p-1$, let 
$$f_{i} = \frac{1}{p} 
\Big{(} 1 + \zeta^ic + \zeta^{2i} c^2 +\cdots +\zeta^{i(p-1)}c^{p-1}\Big{)}e_{\eta},$$
where $c = \frac{\overline{C}_{G}(x){e_{\eta}} }{\sqrt[p]{\lambda}}$ and $\zeta$ 
is a primitive $p^{th}$ root of unity in $\mathbb{C}$. It is clear that all $f_{i}$'s are non zero. Since $e_{\eta}$ and $\overline{C}_{G}(x)$ are central in $\mathbb{C}[G]$, then all $f_{i}$'s are central in $\mathbb{C}[G]$.
It is clear that
\begin{equation*}
c^pe_{\eta}=e_{\eta}.\tag{*}
\end{equation*}
Suppressing the index $i$, let us write
$$f=\frac{1}{p}(1+\zeta c + \cdots + \zeta^{p-1}c^{p-1})e_{\eta}.$$
Then
\begin{align*}
cf 
&=\frac{1}{p} (ce_{\eta} +\zeta c^2e_{\eta} + \cdots +
\zeta^{p-2}c^{p-1}e_{\eta} + \zeta^{p-1}c^pe_{\eta})\\
&=\frac{1}{p} (ce_{\eta} +\zeta c^2e_{\eta} + \cdots +
\zeta^{p-2}c^{p-1}e_{\eta} + \zeta^{-1}e_{\eta}) \mbox{ (by (*))}\\
&= \frac{\zeta^{-1}}{p}(1+\zeta c + \cdots + \zeta^{p-1}c^{p-1})e_{\eta}\\
&=\zeta^{-1}f.
\end{align*}
Thus for $0\leq i,k<p$, we can deduce from above calculation that
\begin{equation*}
c^kf_i=\zeta^{-ik}f_i.\tag{**}
\end{equation*}
Now we show that $f_if_j=\delta_{i,j}f_j$.
\begin{align*}
f_if_j &=\frac{1}{p}\Big{(} \sum_{k=0}^{p-1} (\zeta^ic)^k \Big{)}e_{\eta}
f_j\\
&=\frac{1}{p} \Big{(}\sum_{k=0}^{p-1} \zeta^{ik} (c^kf_j)\Big{)}e_{\eta}\\
&=\frac{1}{p} \Big{(}\sum_{k=0}^{p-1}\zeta^{ik}\zeta^{-jk} f_j
\Big{)}e_{\eta}\hskip5mm (\mbox{by (**)})\\
&=\frac{1}{p} \Big{(}\sum_{k=0}^{p-1}\zeta^{(i-j)k}
\Big{)}f_je_{\eta}\\
&=\delta_{i,j}(f_je_{\eta})=\delta_{i,j}f_j.
\end{align*}
We now show that $\sum^{p-1}_{i = 0} {f_{i}} = e_{\eta}$. 
\begin{align*}
\sum^{p-1}_{i = 0} {f_{i}} & = \sum_{i=0}^{p-1}\Big{(} \frac{1}{p}  \sum_{k=0}^{p-1} (\zeta^ic)^k \Big{)}e_{\eta}\\
& = \frac{1}{p} 
\Big{\{} p + \Big{(}\sum^{p-1}_{i = 0}\zeta^{i}\Big{)}c + 
\Big{(}\sum^{p-1}_{i = 0}\zeta^{2i}\Big{)}c^{2} +  \cdots + \Big{(}\sum^{p-1}_{i = 0}\zeta^{(p-1)i}\Big{)}c^{p-1}\Big{\}}e_{\eta}\\
& = e_{\eta}
\end{align*}
Notice that each $f_{{i}}$ belongs to $\mathbb{C}[G]e_{\eta}$. 
Therefore, $\{f_{i}\,|\, i = 0,1, \dots , p-1\}$ are mutually pairwise orthogonal central idempotents of the ring $\mathbb{C}[G]e_{\eta}$, and whose sum is $e_{\eta}$. Since $\mathbb{C}[G]e_{\eta}$ contains $p$ primitive central idempotents, $\{f_{i} \,|\, i = 0,1, \dots , p-1\}$ are all the primitive central idempotents of $\mathbb{C}[G]e_{\eta}$. Hence, for each $i = 0,1, \dots , p-1$,
$${e_{\rho_{i}} = \frac{1}{p} 
	\Big{(} 1 + \zeta^ic + \zeta^{2i} c^2 +\cdots +\zeta^{i(p-1)}c^{p-1}\Big{)} 
	e_{\eta},}$$
where $c = \frac{\overline{C}_{G}(x){e_{\eta}} }{\sqrt[p]{\lambda}}$ and $\zeta$ 
is a primitive $p^{th}$ root of unity in $\mathbb{C}$.  
Moreover, 
$$e_{\eta} = e_{\rho_0} + e_{\rho_1} + \cdots + e_{\rho_{p-1}}.$$ This completes the proof of $(1)$. 

$(2)$ If $e_{\eta}$ is not a central idempotent in $\mathbb{C}[H]$, then $\eta$ is not
equivalent to $\eta^{x}$. By Theorem \ref{index} 
induced representations $\eta\uparrow_H^G, \eta^{x}\uparrow_H^G,
\ldots ,\eta^{x^{p-1}}\uparrow_H^G$ 
are all equivalent to $\rho$ (say) and $\rho$ is irreducible.
Since the character $\chi_{\rho}$ of $\rho$ vanishes outside
normal subgroup $H$ and $\chi_{\rho}\downarrow^{G}_{H} = \chi_{\eta} + \chi_{\eta^{x}} + \ldots + \chi_{\eta^{x^{p-1}}}$,
we have $e_{\rho} = e_{\eta} + e_{\eta^{x}} + \cdots + e_{\eta^{x^{p-1}}}.$
This completes the proof of $(2)$.
\end{proof}
\begin{Example}(Special linear group $\mathrm{SL}_2(3)$)
Using a long presentation of $\mathrm{SL}_2(3)$, we construct the irreducible representations of $\mathrm{SL}_2(3)$ over $\mathbb{C}$ and the primitive central idempotents of $\mathbb{C}[\mathrm{SL}_2(3)]$.
Let us consider the maximal subnormal series
\begin{equation}\label{long}
\langle e \rangle = G_{o} < C_2 = G_1 < C_4 = G_2 < Q_{8} = G_3 < \mathrm{SL}_2(3) = G_4.
\end{equation}
A long presentation of ${\rm SL}_2(3)$ associated with the series \ref{long} is
\begin{align*}
\langle x,y,z,t \,|\, x^{2} = 1, y^{2} = x, y^{2} = z^{2}, z^{-1}yz = xy, t^{3} = 1, t^{-1}yt = z, t^{-1}zt = yz \rangle.
\end{align*}
The group $Q_8 = G_{3}$ is a normal subgroup of index $3$ in $\mathrm{SL}_2(3)$. Now we construct the complex irreducible representations of $\mathrm{SL}_2(3)$ starting with the complex irreducible representations of $Q_{8}$. 

Let $\rho_1$, $\rho_2$, $\rho_3$, $\rho_4$, $\rho_5$ be the five inequivalent complex irreducible representations
 $Q_{8}$, and are defined by
 \begin{center}
 \begin{tabular}{lcl}
  $\rho_1(y) = 1$, & & $\rho_1(z) = 1$,\\
  $\rho_2(y) = 1$, & & $\rho_1(z) = -1$,\\
  $\rho_3(y) = -1$, & & $\rho_1(z) = 1$,\\
  $\rho_4(y) = -1$, & & $\rho_1(z) = -1$,
 \end{tabular}
\end{center}
and
\begin{equation*}
[\rho_{5}(y)] = 
\begin{bmatrix}
i & 0 \\
0 & -i \end{bmatrix},
\hskip3mm 
[\rho_{5}(z)] = 
\begin{bmatrix}
0 & -1 \\
1 & 0 
\end{bmatrix}.
\end{equation*}
 The trivial representation $\rho_1$ extends to three irreducible representations of $\mathrm{SL}_2(3)$, and are given by 
$$\theta_1(t) = 1, \hskip3mm \theta_2(t) = \omega, \hskip3mm \theta_{3}(t) = \omega^{2}, \hskip5mm \mathrm{where} \, \omega = e^{2\pi i/3}.$$ 

Notice that $\rho_2,\rho_3, \rho_4$ are conjugate to each other, so they induce the same irreducible representation $\theta_4$ of degree 3, and is given by
\begin{center}
	$[\theta_4(x)] = I_{3},  \hskip3mm
	[\theta_4(y)] =
	\begin{bmatrix}
	1 & 0 & 0\\
	0  & -1  & 0\\
	0 & 0 & -1
	\end{bmatrix}, \hskip3mm
	[\theta_4(z)] = 
	\begin{bmatrix}
	-1 & 0 & 1\\
	0  & 1  & 0\\
	0 & 0 & -1 
	\end{bmatrix},$
\end{center}
\begin{center}
	$[\theta_4(t)] = 
	\begin{bmatrix}
	0& 0 & 1\\
	1  & 0  & 0\\
	0 & 1 & 0
	\end{bmatrix}$.
\end{center}
$\rho_{5}$ extends to three irreducible representations of $\mathrm{SL}_2(3)$, let these extensions be $\theta_{5}, \theta_{6}, \theta_{7}$. 
The matrix representations of $\theta_{5}, \theta_{6}, \theta_{7}$ are given by
\begin{equation*}
\theta_k(x)= 
\begin{bmatrix}
-1 & 0 \\
0 & -1
\end{bmatrix}, \hskip1mm 
\theta_k(y)= 
\begin{bmatrix}
i & 0 \\
0 & -i 
\end{bmatrix},  \hskip1mm
\theta_k(z)= 
\begin{bmatrix}
0 & -1 \\
1 & 0 
\end{bmatrix} \hskip3mm (k = 5,6,7),
\end{equation*}
and 
\small
\begin{equation*}
\theta_{5}(t) =\begin{bmatrix}
\frac{-1+i}{2} & \frac{-1 - i}{2}\\
\frac{1-i}{2} & \frac{-1-i}{2}\end{bmatrix}, 
\theta_{6}(t) = \omega\begin{bmatrix}
\frac{-1+i}{2} & \frac{-1 - i}{2}\\
\frac{1-i}{2} & \frac{-1-i}{2}\end{bmatrix}, 
\theta_{7}(t) = \omega^{2}\begin{bmatrix}
\frac{-1+i}{2} & \frac{-1 - i}{2}\\
\frac{1-i}{2} & \frac{-1-i}{2}\end{bmatrix}.
\end{equation*}
The primitive central idempotents of $\mathbb{C}[Q_{8}]$ are 

\begin{enumerate}
\item $e_xe_ye_z,$ 
\item $e_xe_ye_{-z},$ 
\item $e_xe_{-y}e_z,$
\item $e_xe_{-y}e_{-z},$
\item $e_{-x}e_{iy} + e_{-x}e_{-iy} = 1 - e_x,$
\end{enumerate}
where $e_X = \frac{1+X}{2}$; $X\in \{\pm\, x, \pm\, y,  \pm\,  z\}$.
 
Let $\overline{C}(t)$ denotes the conjugacy class sum of $t$ as an element of $\mathbb{C}[\mathrm{SL}_2(3)]$. Then

\begin{enumerate}
\item $u_1 =  \frac{1}{3}\begin{Bmatrix} 1 + \begin{pmatrix} \frac{\overline{C}(t)}{-2} \end{pmatrix} + \begin{pmatrix}\frac{\overline{C}(t)}{-2}\end{pmatrix}^{2}\end{Bmatrix}e_{-x},$
\item $u_2 = \frac{1}{3}\begin{Bmatrix} 1 + \omega^2 \begin{pmatrix} \frac{\overline{C}(t)}{-2} \end{pmatrix} + 
\omega\begin{pmatrix}\frac{\overline{C}(t)}{-2}\end{pmatrix}^{2}\end{Bmatrix}e_{-x}, $
\item $u_3 =  \frac{1}{3}\begin{Bmatrix} 1 + \omega \begin{pmatrix} \frac{\overline{C}(t)}{-2} \end{pmatrix} + \omega^2\begin{pmatrix}\frac{\overline{C}(t)}{-2}\end{pmatrix}^{2}\end{Bmatrix}e_{-x},$
\item $u_4 = e_xe_ye_{-z}+ e_xe_{-y}e_z + e_xe_{-y}e_{-z}, $
\item $u_{5} = e_xe_ye_ze_{wt},$
\item $u_{6} = e_xe_ye_ze_{w^{2}t},$
\item $u_{7} = e_xe_ye_ze_t,$
\end{enumerate}
where $\omega = e^{2\pi i/3}$ and $e_X = \frac{1+X}{2}$ for $X\in \{\pm\, x, \pm\, y,  \pm\,  z\}$; $e_Y=\frac{1+Y+Y^2}{3}$ for $Y \in \{t,\, \omega t,\, \omega^2 t\}$, are the primitive central idempotents of $\mathbb{C}[\mathrm{SL}_2(3)]$. 	
\end{Example}


\begin{thebibliography}{1}
\bibitem{berman120} S. D. Berman, {\it Group algebras of Abelian extensions of finite groups} (Russian), Dokl. Akad. Nauk. SSSR (1955), Vol. {\bf 102} , page 431-434.
	
\bibitem{alg111} P. B\"{u}rgisser, M. Clausen and A. Shokrollahi,
{\it Algebraic Complexity Theory}, Grundlehren der mathematischen
Wissenschaften, Vol. {\bf 315}, Springer-Verlag (1997), Berlin Heidelberg GmbH.
	
\bibitem{jes 1}	E. G. Goodaire, E. Jespers, C. Polcino Milies, {\it Alternative Loop Rings}, Math.Studies No. 184 (North-Holland, 1996).

\bibitem{milies126} C. P. Milies and S. K. Sehgal, {\it An Introduction to Group
Rings}, Kluwer Academic Publishers (2002), Dordrecht.
\end{thebibliography}
\end{document}